\newtheorem{theorem}{Theorem}
\newtheorem{definition}[theorem]{Definition}
\newtheorem{notation}[theorem]{Notation}
\newtheorem{remark}[theorem]{Remark}
\newenvironment{proof}[1][Proof]{\noindent\textbf{#1.} }{\ \rule{0.5em}{0.5em}}
\begin{document}

\title{Homogenization of linear parabolic equations with three spatial and
three temporal scales for certain matchings between the microscopic scales}
\author{Tatiana Danielsson and Pernilla Johnsen }
\maketitle

\begin{abstract}
In this paper we establish compactness results of multiscale and very weak
multiscale type for sequences bounded in $L^{2}(0,T;H_{0}^{1}(\Omega ))$,
fulfilling a certain condition. We apply the results in the homogenization
of $\varepsilon ^{p}\partial _{t}u_{\varepsilon }\left( x,t\right) -\nabla
\cdot \left( a\left( x/\varepsilon ,x/\varepsilon ^{2},t/\varepsilon
^{q},t/\varepsilon ^{r}\right) \nabla u_{\varepsilon }\left( x,t\right)
\right) =f\left( x,t\right) $, where $0<p<q<r$. The homogenization result
reveals two special phenomena, namely that the homogenized problem is
elliptic and that the matching for when the local problem is parabolic is
shifted by $p$, compared to the standard matching that gives rise to local
parabolic problems.
\end{abstract}

\section{Introduction}

Let $T>0$ and $\Omega _{T}=\Omega \times \left( 0,T\right) $, where $\Omega $
is an open bounded subset of $%
\mathbb{R}
^{N}$ with smooth boundary and $(0,T)$ is an open bounded interval in $%
\mathbb{R}
$. We consider the homogenization of the linear parabolic equation%
\begin{eqnarray}
\varepsilon ^{p}\partial _{t}u_{\varepsilon }\left( x,t\right) -\nabla \cdot
\left( a\left( \frac{x}{\varepsilon },\frac{x}{\varepsilon ^{2}},\frac{t}{%
\varepsilon ^{q}},\frac{t}{\varepsilon ^{r}}\right) \nabla u_{\varepsilon
}\left( x,t\right) \right) &=&f\left( x,t\right) \text{ in }\Omega _{T}\text{%
,}  \notag \\
u_{\varepsilon }\left( x,0\right) &=&u_{0}\left( x\right) \text{ in }\Omega 
\text{,}  \label{problem} \\
u_{\varepsilon }\left( x,t\right) &=&0\text{ on }\partial \Omega \times
\left( 0,T\right) \text{,}  \notag
\end{eqnarray}%
where $0<p<q<r$ are real numbers, $f\in L^{2}(\Omega _{T})$ and $u_{0}\in
L^{2}(\Omega )$. The coefficient $a$ is periodic with respect to the unit
cube $Y=(0,1)^{N}$ in the first two variables and with respect to the unit
interval $S=(0,1)$ in the third and fourth variable. More detailed
information on the equation will be provided in Section \ref{Homogenization}.

Homogenization means that we study the limit behavior as $\varepsilon
\rightarrow 0$ and search for a weak $L^{2}(0,T;H_{0}^{1}(\Omega ))$-limit $%
u $ to $\left\{ u_{\varepsilon }\right\} $ which is the solution to a
so-called homogenized problem. This limit problem is governed by a
coefficient $b$ that unlike $a\left( x/\varepsilon ,x/\varepsilon
^{2},t/\varepsilon ^{q},t/\varepsilon ^{r}\right) $ does not include rapid
oscillations. In the homogenization procedure local problems are also
extracted which include information about the microstructure and whose
solutions are utilized to determine $b$.

The present paper is a further generalization of the work presented in \cite%
{JoLo1}. In earlier works, like e.g. \cite{FHOLParbitrary}, boundedness in $%
W^{1,2}(0,T;H_{0}^{1}(\Omega ),L^{2}(\Omega ))$, meaning that $\left\{
u_{\varepsilon }\right\} $ is bounded in $L^{2}(0,T;H_{0}^{1}(\Omega ))$ and 
$\left\{ \partial _{t}u_{\varepsilon }\right\} $ is bounded in $%
L^{2}(0,T;H^{-1}(\Omega ))$, has been required when compactness results have
been established. In \cite{JoLo1}, compactness results of $\left( 2,2\right) 
$-scale and very weak $\left( 2,2\right) $-scale convergence type were
proven by requiring boundedness of the sequence $\left\{ u_{\varepsilon
}\right\} $ in $L^{2}(0,T;H_{0}^{1}(\Omega ))$ but replacing the assumption
of boundedness of the time derivative in $L^{2}(0,T;H^{-1}(\Omega ))$ by a
certain condition. This new approach originates, up to the authors'
knowledge, from \cite{Lob} and will be used in the present work. Here we
focus on establishing appropriate compactness results and a homogenization
result for the parabolic partial differential equation (\ref{problem}). In
particular, we generalize the result from \cite{JoLo1} to the $(2,3)$-scale
and $\left( 3,3\right) $-scale convergence types, adapting to the problem (%
\ref{problem}), and present compactness results for both multiscale and very
weak multiscale convergence.

For the homogenization part of this paper we apply the convergence results
to establish a homogenization result for (\ref{problem}) with 13 different
outcomes, depending on the choices of parameters $p$, $q$ and $r$. The
homogenization result will reveal two phenomena, which also occurred in both 
\cite{JoLo1} and the proceeding work \cite{Jolo2}, where the homogenization
of parabolic problems of a similar kind, but with only one rapid scale in
space and time each, was presented. The first phenomenon is that the
homogenized problem is of elliptic type even though the original problem is
a parabolic one and the second is that resonance occurs for different
matchings between the microscopic scales than the standard ones. By
resonance we mean that the local problem is parabolic, which only occurs for
certain matchings between the microscopic scales. What we call the standard
matching is when a temporal scale equals the square of a spatial one, as was
the case in several other studies, see e.g. \cite{BLP}, \cite{Hol}, \cite%
{NgWo}, \cite{AlPi}, \cite{FHOLstrange-term}, \cite{FHOLPmismatch}, \cite%
{SvWo}, \cite{FHOLParbitrary} or \cite{DoWo} for more on this matter.
However, in our case the matching for when we have resonance is shifted by $%
p $. Note that in our equation, (\ref{problem}), we would get resonance for
the standard matching if $p=0$, cf. Section 5.3.1 in \cite{PerPhD}.

The paper is organized as follows. In Section 2 we recall some of the key
definitions, namely evolution multiscale convergence and very weak evolution
multiscale convergence. We prove the main convergence results (see Theorems %
\ref{Theorem - gradient charact} and \ref{Theorem - Compactresult vw}),
which lay the foundation to establish the homogenization result. Theorem \ref%
{Theorem - gradient charact} is where we find characterizations of the $%
(2,3) $-scale and $\left( 3,3\right) $-scale limits for $\left\{ \nabla
u_{\varepsilon }\right\} $ under certain assumptions. In Theorem \ref%
{Theorem - Compactresult vw} we consider very weak $\left( 2,3\right) $%
-scale and $\left( 3,3\right) $-scale convergence for the sequences $\left\{
\varepsilon ^{-1}u_{\varepsilon }\right\} $ and $\left\{ \varepsilon
^{-2}u_{\varepsilon }\right\} $, respectively. In Section 3, we state a
homogenization result presented in Theorem \ref{Theorem - homogenisering}.

We end the introduction with some essential notations used throughout this
paper.

\begin{notation}
\label{notations}We denote $\mathcal{Y}_{n,m}=Y^{n}\times S^{m}$ with $%
Y^{n}=Y_{1}\times Y_{2}\times \cdots \times Y_{n}$ and $S^{m}=S_{1}\times
S_{2}\times \cdots \times S_{m}$, where $Y_{1}=Y_{2}=\ldots =Y_{n}=Y=\left(
0,1\right) ^{N}$ and $S_{1}=S_{2}=\ldots =S_{m}=S=\left( 0,1\right) $. We
let $y^{n}=y_{1},y_{2},\ldots ,y_{n}$, $dy^{n}=dy_{1}dy_{2}\cdots dy_{n}$, $%
s^{m}=s_{1},s_{2},\ldots ,s_{m}$ and $ds^{m}=ds_{1}ds_{2}\cdots ds_{m}$. We
define the function space $\mathcal{W}_{i,j}=\left\{ u\in L_{\sharp
}^{2}(S_{j};H_{\sharp }^{1}(Y_{i})/%
\mathbb{R}
):\partial _{s_{j}}u\in L_{\sharp }^{2}(S_{j};(H_{\sharp }^{1}(Y_{i})/%
\mathbb{R}
)^{\prime })\right\} $. The subscript $_{\sharp }$ is used to denote
periodicity of the functions involved over the domain in question. Lastly,
for $k=1,\ldots ,n$ and $j=1,\ldots ,m$, the scale functions $\varepsilon
_{k}\left( \varepsilon \right) $ and $\varepsilon _{j}^{\prime }\left(
\varepsilon \right) $ are strictly positive functions that tend to zero as $%
\varepsilon $ does and $\left\{ \varepsilon _{1},\ldots ,\varepsilon
_{n}\right\} $ and $\left\{ \varepsilon _{1}^{\prime },\ldots ,\varepsilon
_{m}^{\prime }\right\} $ denote lists of spatial and temporal scales,
respectively.
\end{notation}

\section{Multiscale and very weak multiscale convergence}

The concept of multiscale convergence is a generalization of the classical
two-scale convergence, originating from \cite{Ngu1} and \cite{Ngu2}.
Two-scale convergence is suitable for sequences having one microscopic
spatial scale and it has been generalized, first to include multiple spatial
scales by Allaire and Briane in \cite{AlBr}, and later to also include
multiple temporal scales.

\begin{definition}
\label{Definition 2} A sequence $\left\{ u_{\varepsilon }\right\} $ in $%
L^{2}(\Omega _{T})$ is said to $\left( n+1,m+1\right) $-scale converge to $%
u_{0}\in L^{2}(\Omega _{T}\times \mathcal{Y}_{n,m})$ if%
\begin{gather*}
\lim_{\varepsilon \rightarrow 0}\int_{\Omega _{T}}u_{\varepsilon }\left(
x,t\right) v\left( x,t,\frac{x}{\varepsilon _{1}},\cdots ,\frac{x}{%
\varepsilon _{n}},\frac{t}{\varepsilon _{1}^{\prime }},\cdots ,\frac{t}{%
\varepsilon _{m}^{\prime }}\right) dxdt \\
=\int_{\Omega _{T}}\int_{\mathcal{Y}_{n,m}}u_{0}\left(
x,t,y^{n},s^{m}\right) v\left( x,t,y^{n},s^{m}\right) dy^{n}ds^{m}dxdt
\end{gather*}%
for all $v\in L^{2}(\Omega _{T};C_{\sharp }(\mathcal{Y}_{n,m})).$ This is
denoted by 
\begin{equation*}
u_{\varepsilon }\left( x,t\right) \overset{n+1,m+1}{\rightharpoonup }%
u_{0}\left( x,t,y^{n},s^{m}\right) \text{.}
\end{equation*}
\end{definition}

We make some standard assumptions on the scales. We say that the scales in a
list $\left\{ \varepsilon _{1},\ldots ,\varepsilon _{n}\right\} $ are
separated if%
\begin{equation*}
\lim_{\varepsilon \rightarrow 0}\frac{\varepsilon _{k+1}}{\varepsilon _{k}}=0
\end{equation*}%
and well-separated if there exists a positive integer $\ell $ such that%
\begin{equation*}
\lim_{\varepsilon \rightarrow 0}\frac{1}{\varepsilon _{k}}\left( \frac{%
\varepsilon _{k+1}}{\varepsilon _{k}}\right) ^{\ell }=0\text{,}
\end{equation*}%
where $k=1,\ldots ,n-1$. Following the definition by Persson, see e.g. \cite%
{Per}, the generalization of separatedness and well-separatedness to include
two lists of scales reads as follows.

\begin{definition}
Let $\left\{ \varepsilon _{1},\ldots ,\varepsilon _{n}\right\} $ and $%
\left\{ \varepsilon _{1}^{\prime },\ldots ,\varepsilon _{m}^{\prime
}\right\} $ be lists of (well-)separated scales. Collect all elements from
both lists in one common list. If from possible duplicates, where by
duplicates we mean scales which tend to zero equally fast, one member of
each pair is removed and the list in order of magnitude of all the remaining
elements is (well-)separated, the lists $\left\{ \varepsilon _{1},\ldots
,\varepsilon _{n}\right\} $ and $\left\{ \varepsilon _{1}^{\prime },\ldots
,\varepsilon _{m}^{\prime }\right\} $ are said to be jointly
(well-)separated.
\end{definition}

We present a compactness result for evolution multiscale convergence.

\begin{theorem}
\label{Theorem - begr ger 3,3}Let $\left\{ u_{\varepsilon }\right\} $ be a
bounded sequence in $L^{2}(\Omega _{T})$ and suppose that the lists $\left\{
\varepsilon _{1},\ldots ,\varepsilon _{n}\right\} $ and $\left\{ \varepsilon
_{1}^{\prime },\ldots ,\varepsilon _{m}^{\prime }\right\} $ are jointly
separated. Then, up to a subsequence,%
\begin{equation*}
u_{\varepsilon }\left( x,t\right) \overset{n+1,m+1}{\rightharpoonup }%
u_{0}\left( x,t,y^{n},s^{m}\right) \text{,}
\end{equation*}%
where $u_{0}\in L^{2}(\Omega _{T}\times \mathcal{Y}_{n,m})$.
\end{theorem}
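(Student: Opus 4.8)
The plan is to realize the $(n+1,m+1)$-scale limit as the Riesz representative of a bounded linear functional built from the limiting integrals. For each $\varepsilon$ introduce the linear functional
\[
L_\varepsilon(v)=\int_{\Omega_T}u_\varepsilon(x,t)\,v\Bigl(x,t,\tfrac{x}{\varepsilon_1},\ldots,\tfrac{x}{\varepsilon_n},\tfrac{t}{\varepsilon_1'},\ldots,\tfrac{t}{\varepsilon_m'}\Bigr)\,dx\,dt
\]
on the space $L^2(\Omega_T;C_\sharp(\mathcal{Y}_{n,m}))$. The goal is to show that, along a subsequence, $L_\varepsilon(v)$ converges for every admissible $v$ to a limit of the form $\int_{\Omega_T}\int_{\mathcal{Y}_{n,m}}u_0 v$, which is precisely the assertion of the theorem.

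First I would obtain a uniform bound. By the Cauchy--Schwarz inequality,
\[
|L_\varepsilon(v)|\le \|u_\varepsilon\|_{L^2(\Omega_T)}\,\Bigl\|v\bigl(\,\cdot\,,\tfrac{\cdot}{\varepsilon_1},\ldots,\tfrac{\cdot}{\varepsilon_m'}\bigr)\Bigr\|_{L^2(\Omega_T)},
\]
and the first factor is bounded by some constant $C$ by hypothesis. The essential point is to control the second factor, which is where the joint separatedness of the two lists of scales enters. The key ingredient is a generalized mean-value (Riemann--Lebesgue type) property: for $w\in L^1(\Omega_T;C_\sharp(\mathcal{Y}_{n,m}))$ one has
\[
\lim_{\varepsilon\to 0}\int_{\Omega_T}w\Bigl(x,t,\tfrac{x}{\varepsilon_1},\ldots,\tfrac{t}{\varepsilon_m'}\Bigr)\,dx\,dt=\int_{\Omega_T}\int_{\mathcal{Y}_{n,m}}w\,dy^n\,ds^m\,dx\,dt.
\]
Applying this to $w=|v|^2$, which is itself admissible, shows that the second factor converges to $\|v\|_{L^2(\Omega_T\times\mathcal{Y}_{n,m})}$, whence $\limsup_\varepsilon|L_\varepsilon(v)|\le C\|v\|_{L^2(\Omega_T\times\mathcal{Y}_{n,m})}$. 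I expect the proof of this mean-value property to be the main obstacle: it is exactly here that joint separatedness is used, guaranteeing that the oscillations at the distinct spatial and temporal scales decouple so that the averages over $\mathcal{Y}_{n,m}$ factor correctly. For a single scale this is the classical averaging lemma, and the separation condition is what allows one to pass from one scale to the next by an approximation argument.

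With the uniform bound in hand, the extraction is routine. Since $L^2(\Omega_T\times\mathcal{Y}_{n,m})$ is separable and $L^2(\Omega_T;C_\sharp(\mathcal{Y}_{n,m}))$ is dense in it, I would fix a countable dense family of test functions and use a diagonal argument to extract a subsequence along which $L_\varepsilon(v)$ converges for every member of the family. The uniform estimate then permits the limit functional $L$ to be extended by continuity to all of $L^2(\Omega_T\times\mathcal{Y}_{n,m})$, where it is bounded with norm at most $C$. Finally, by the Riesz representation theorem there is a unique $u_0\in L^2(\Omega_T\times\mathcal{Y}_{n,m})$ with $L(v)=\int_{\Omega_T}\int_{\mathcal{Y}_{n,m}}u_0 v\,dy^n\,ds^m\,dx\,dt$, and reading off this identity on the dense family, and hence on all admissible $v$, gives exactly the convergence $u_\varepsilon\overset{n+1,m+1}{\rightharpoonup}u_0$ along the subsequence.
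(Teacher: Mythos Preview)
Your argument is correct and follows the standard route for establishing multiscale compactness: a uniform bound on the functionals $L_\varepsilon$ via Cauchy--Schwarz combined with the mean-value property for admissible oscillating test functions (this is precisely where joint separatedness is consumed), a diagonal extraction over a countable dense family, and then Riesz representation. The paper itself does not supply a proof of this statement; it simply refers the reader to Theorem~A.1 in \cite{FHOLParbitrary}, whose proof proceeds along exactly the lines you outline. One small point worth tightening: when you pick the countable dense family, make sure it is chosen inside $L^2(\Omega_T;C_\sharp(\mathcal{Y}_{n,m}))$ and dense there (this space is separable), so that the uniform estimate $|L_\varepsilon(v)-L_\varepsilon(v_k)|\le C\|v-v_k\|_{L^2(\Omega_T;C_\sharp(\mathcal{Y}_{n,m}))}$ lets you pass from convergence on the family to convergence on all admissible $v$; density in the larger space $L^2(\Omega_T\times\mathcal{Y}_{n,m})$ then gives the extension of the limit functional and the Riesz representative.
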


\begin{proof}
See Theorem A.1 in \cite{FHOLParbitrary}.
\end{proof}

As the next theorem states, the evolution multiscale limit is unique.

\begin{theorem}
\label{Theorem - multiskala unik}The $\left( n+1,m+1\right) $-scale limit is
unique.
\end{theorem}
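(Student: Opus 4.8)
The plan is to prove uniqueness by the standard argument: suppose $\{u_\varepsilon\}$ has two $(n+1,m+1)$-scale limits $u_0$ and $\tilde u_0$, both in $L^2(\Omega_T\times\mathcal Y_{n,m})$, and show their difference vanishes. Subtracting the two limit identities in Definition \ref{Definition 2} kills the common left-hand side (the limit of $\int_{\Omega_T}u_\varepsilon v(\cdots)\,dx\,dt$), leaving
\begin{equation*}
\int_{\Omega_T}\int_{\mathcal Y_{n,m}}\bigl(u_0-\tilde u_0\bigr)(x,t,y^n,s^m)\,v(x,t,y^n,s^m)\,dy^n\,ds^m\,dx\,dt=0
\end{equation*}
for every admissible test function $v\in L^2(\Omega_T;C_\sharp(\mathcal Y_{n,m}))$. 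The goal is then to conclude $u_0=\tilde u_0$ almost everywhere from this orthogonality relation.

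The key step is a density argument. I would first observe that the class of test functions in Definition \ref{Definition 2}, namely $L^2(\Omega_T;C_\sharp(\mathcal Y_{n,m}))$, is dense in $L^2(\Omega_T\times\mathcal Y_{n,m})=L^2(\Omega_T;L^2(\mathcal Y_{n,m}))$. This is because $C_\sharp(\mathcal Y_{n,m})$ is dense in $L^2(\mathcal Y_{n,m})$ (continuous periodic functions are dense in $L^2$ over the periodicity cell), and this density lifts to the Bochner spaces $L^2(\Omega_T;\cdot)$ in the usual way. Since $u_0-\tilde u_0$ lies in $L^2(\Omega_T\times\mathcal Y_{n,m})$ and is orthogonal to a dense subspace of that Hilbert space, it must be the zero element, i.e. $u_0=\tilde u_0$ almost everywhere in $\Omega_T\times\mathcal Y_{n,m}$.

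The main obstacle, and the only point requiring care, is justifying the density of $L^2(\Omega_T;C_\sharp(\mathcal Y_{n,m}))$ in $L^2(\Omega_T\times\mathcal Y_{n,m})$ rigorously rather than invoking it as folklore. The cleanest route is to note that simple tensor functions of the form $\phi(x,t)\psi(y^n,s^m)$ with $\phi\in C_c^\infty(\Omega_T)$ (or $\phi\in L^2(\Omega_T)$) and $\psi\in C_\sharp(\mathcal Y_{n,m})$ already span a dense subset: finite linear combinations of such tensors are dense in $L^2(\Omega_T\times\mathcal Y_{n,m})$ by the standard tensor-product density for Lebesgue spaces on product domains, and each such tensor is an admissible test function. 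Once density is in hand, the orthogonality relation forces $u_0-\tilde u_0=0$ in $L^2$, completing the proof. I would keep this argument short, since the substance is entirely the density fact and the elementary Hilbert-space conclusion that a vector orthogonal to a dense set is zero.
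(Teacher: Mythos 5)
Your proposal is correct and takes essentially the same approach as the paper: the paper's proof simply defers to the discussion below Definition 1 in \cite{LNW}, which is exactly this argument of subtracting the two limit identities and invoking density of the admissible test functions in $L^{2}(\Omega _{T}\times \mathcal{Y}_{n,m})$ to conclude that the difference of the two limits is orthogonal to a dense subspace, hence zero. Your added care in justifying the density via tensor products of $L^{2}(\Omega _{T})$ functions with continuous periodic functions is precisely the folklore fact that the cited reference relies on.
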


\begin{proof}
The proof is analogous to the proof of the uniqueness of the two-scale limit
given in the discussion below Definition 1 in \cite{LNW}.
\end{proof}

We are now ready to give a compactness result for the gradient of a sequence 
$\left\{ u_{\varepsilon }\right\} $. The following theorem will play a vital
role in the homogenization of (\ref{problem}).

\begin{theorem}
\label{Theorem - gradient charact}Let $\left\{ u_{\varepsilon }\right\} $ be
a bounded sequence in $L^{2}(0,T;H_{0}^{1}(\Omega ))$ and, for any $v\in
D(\Omega )$, $c_{1}\in D(0,T)$, $c_{2}\in C_{\sharp }^{\infty }(S_{1})$, $%
c_{3}\in C_{\sharp }^{\infty }(S_{2})$ and $r>q>0$,%
\begin{equation}
\lim_{\varepsilon \rightarrow 0}\int_{\Omega _{T}}u_{\varepsilon }\left(
x,t\right) v\left( x\right) \partial _{t}\left( \varepsilon ^{r}c_{1}\left(
t\right) c_{2}\left( \frac{t}{\varepsilon ^{q}}\right) c_{3}\left( \frac{t}{%
\varepsilon ^{r}}\right) \right) dxdt=0  \label{villkor1}
\end{equation}%
and%
\begin{equation}
\lim_{\varepsilon \rightarrow 0}\int_{\Omega _{T}}u_{\varepsilon }\left(
x,t\right) v\left( x\right) \partial _{t}\left( \varepsilon ^{q}c_{1}\left(
t\right) c_{2}\left( \frac{t}{\varepsilon ^{q}}\right) \right) dxdt=0\text{.}
\label{villkor2}
\end{equation}%
Then, with $\varepsilon _{1}=\varepsilon $, $\varepsilon _{2}=\varepsilon
^{2}$, $\varepsilon _{1}^{\prime }=\varepsilon ^{q}$ and $\varepsilon
_{2}^{\prime }=\varepsilon ^{r}$, up to a subsequence,%
\begin{equation}
u_{\varepsilon }\left( x,t\right) \rightharpoonup u\left( x,t\right) \text{
in }L^{2}(0,T;H_{0}^{1}(\Omega ))\text{,}  \label{svag konvergens}
\end{equation}%
\begin{equation}
u_{\varepsilon }\left( x,t\right) \overset{3,3}{\rightharpoonup }u\left(
x,t\right) \text{,}  \label{3,3}
\end{equation}%
\begin{equation}
\nabla u_{\varepsilon }\left( x,t\right) \overset{2,3}{\rightharpoonup }%
\nabla u\left( x,t\right) +\nabla _{y_{1}}u_{1}\left( x,t,y_{1},s^{2}\right)
\label{gradsplit1}
\end{equation}%
and%
\begin{equation}
\nabla u_{\varepsilon }\left( x,t\right) \overset{3,3}{\rightharpoonup }%
\nabla u\left( x,t\right) +\nabla _{y_{1}}u_{1}\left( x,t,y_{1},s^{2}\right)
+\nabla _{y_{2}}u_{2}\left( x,t,y^{2},s^{2}\right) \text{,}
\label{gradsplit2}
\end{equation}%
where $u\in L^{2}(0,T;H_{0}^{1}(\Omega ))$, $u_{1}\in L^{2}(\Omega
_{T}\times S^{2};H_{\sharp }^{1}(Y_{1})/%
\mathbb{R}
)$ and $u_{2}\in L^{2}(\Omega _{T}\times \mathcal{Y}_{1,2};H_{\sharp
}^{1}(Y_{2})/%
\mathbb{R}
)$.
\end{theorem}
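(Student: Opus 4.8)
The plan is to extract the various limits by a bootstrapping argument, peeling off the scales one at a time and using conditions (\ref{villkor1}) and (\ref{villkor2}) precisely at the points where the time-derivative bound would ordinarily be invoked. First I would establish (\ref{svag konvergens}): since $\{u_\varepsilon\}$ is bounded in $L^2(0,T;H^1_0(\Omega))$, which is reflexive, weak compactness gives a subsequence and a weak limit $u\in L^2(0,T;H^1_0(\Omega))$. Applying Theorem~\ref{Theorem - begr ger 3,3} to both $\{u_\varepsilon\}$ and $\{\nabla u_\varepsilon\}$ (noting the scales $\varepsilon,\varepsilon^2,\varepsilon^q,\varepsilon^r$ are jointly separated because $0<p<q<r$ with the spatial scales $\varepsilon,\varepsilon^2$), I obtain $(3,3)$-scale limits $u_0$ and a $(2,3)$-scale limit $w_0$ of the gradient, each in the appropriate $L^2$ space.

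Next I would identify these limits. The standard move is to test the $(3,3)$-scale convergence of $u_\varepsilon$ against functions that oscillate only in the microscopic variables and integrate by parts to show the limit $u_0$ is independent of $y^2$ and $s^2$; combined with the relation between the two-scale limit of a gradient-bounded sequence and the weak limit, this forces $u_0(x,t,y^2,s^2)=u(x,t)$, giving (\ref{3,3}). The heart of the theorem is then the gradient characterizations (\ref{gradsplit1}) and (\ref{gradsplit2}). I would test $\nabla u_\varepsilon$ against vector fields of the form $v(x)c_1(t)\Psi(x/\varepsilon,x/\varepsilon^2)c_2(t/\varepsilon^q)c_3(t/\varepsilon^r)$ where $\Psi$ is divergence-free in the relevant fast spatial variable, integrate by parts moving the gradient onto $u_\varepsilon$, and pass to the limit. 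This shows the fast-variable dependence of the gradient limit is itself a gradient in $y_1$ (respectively $y_2$), producing the correctors $u_1$ and $u_2$ lying in $L^2(\Omega_T\times S^2;H^1_\sharp(Y_1)/\mathbb{R})$ and $L^2(\Omega_T\times\mathcal{Y}_{1,2};H^1_\sharp(Y_2)/\mathbb{R})$; this is essentially the Allaire--Briane multiscale gradient splitting adapted to the present scale configuration.

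The main obstacle, and where conditions (\ref{villkor1}) and (\ref{villkor2}) are indispensable, is controlling the temporal oscillations of the limits. Without a bound on $\partial_t u_\varepsilon$ one cannot a priori rule out that the correctors $u_1,u_2$ genuinely depend on the fast time variables $s_1,s_2$ in an uncontrolled way, nor that $u_0$ depends on time oscillations. The role of (\ref{villkor1}) and (\ref{villkor2}) is to serve as substitutes for the missing time-derivative estimate: testing against $\partial_t(\varepsilon^r c_1 c_2 c_3)$ and $\partial_t(\varepsilon^q c_1 c_2)$ and passing to the limit produces orthogonality relations forcing the $s^2$-dependence of $u_0$ to vanish and constraining the corrector structure so that the splittings close with $u_1$ depending on $(x,t,y_1,s^2)$ and $u_2$ on $(x,t,y^2,s^2)$ exactly as stated. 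The delicate point is matching the powers $\varepsilon^q,\varepsilon^r$ in the test functions against the scales $\varepsilon_1'=\varepsilon^q,\varepsilon_2'=\varepsilon^r$ so that the derivative hitting the fast factors $c_2(t/\varepsilon^q),c_3(t/\varepsilon^r)$ contributes an $O(1)$ term while the derivative of the slow factor $c_1$ is killed by the $\varepsilon^q$ or $\varepsilon^r$ prefactor; carrying out this scaling bookkeeping carefully, and checking that the resulting variational identities indeed pin down the claimed function spaces, is the crux of the argument.
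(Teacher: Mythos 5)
Your proposal is correct and takes essentially the same route as the paper's proof: weak compactness in the reflexive space $L^{2}(0,T;H_{0}^{1}(\Omega ))$, the multiscale compactness theorem applied to $\left\{ u_{\varepsilon }\right\} $ and $\left\{ \nabla u_{\varepsilon }\right\} $, characterization of the $\left( 3,3\right) $-scale limit of $\left\{ u_{\varepsilon }\right\} $ by integration by parts in the fast spatial variables and by conditions (\ref{villkor1}) and (\ref{villkor2}) in the fast temporal variables (exactly as substitutes for the missing bound on $\left\{ \partial _{t}u_{\varepsilon }\right\} $), followed by the Allaire--Briane divergence-free orthogonality argument of \cite{AlBr} for the gradient splittings (\ref{gradsplit1}) and (\ref{gradsplit2}). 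One minor clarification: the two conditions act only on the limit $u_{0}$ of $\left\{ u_{\varepsilon }\right\} $ itself, forcing it to coincide with the weak limit $u$ (the correctors $u_{1},u_{2}$ are in fact free to depend on $s^{2}$), and in the paper (\ref{gradsplit1}) is then obtained from (\ref{gradsplit2}) by averaging over $Y_{2}$, which also guarantees that the same $u_{1}$ appears in both splittings.
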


\begin{proof}
From the boundedness of $\left\{ u_{\varepsilon }\right\} $ in $%
L^{2}(0,T;H_{0}^{1}(\Omega ))$, the weak convergence (\ref{svag konvergens})
follows immediately. It also implies that $\left\{ \nabla u_{\varepsilon
}\right\} $ is bounded in $L^{2}(\Omega _{T})^{N}$ and hence, according to
Theorems \ref{Theorem - begr ger 3,3} and \ref{Theorem - multiskala unik},
we have%
\begin{equation}
u_{\varepsilon }\left( x,t\right) \overset{3,3}{\rightharpoonup }u_{0}\left(
x,t,y^{2},s^{2}\right)  \label{u0}
\end{equation}%
and 
\begin{equation}
\nabla u_{\varepsilon }\left( x,t\right) \overset{3,3}{\rightharpoonup }\tau
_{0}\left( x,t,y^{2},s^{2}\right) \text{,}  \label{tau}
\end{equation}%
up to a subsequence, for some unique $u_{0}\in L^{2}(\Omega _{T}\times 
\mathcal{Y}_{2,2})$ and $\tau _{0}\in L^{2}(\Omega _{T}\times \mathcal{Y}%
_{2,2})^{N}$.

We proceed by characterizing $u_{0}$, where we first show that $u_{0}$ is
independent of the local space and time variables $y_{1}$, $y_{2}$, $s_{1}$
and $s_{2}$. Letting $v_{1}\in \emph{D}(\Omega )$, $v_{2}\in C_{\sharp
}^{\infty }(Y_{1})$, $v_{3}\in C_{\sharp }^{\infty }(Y_{2})^{N}$, $c_{1}\in 
\emph{D}(0,T)$, $c_{2}\in C_{\sharp }^{\infty }(S_{1})$ and $c_{3}\in
C_{\sharp }^{\infty }(S_{2})$, it holds that%
\begin{gather*}
\int_{\Omega _{T}}\nabla u_{\varepsilon }\left( x,t\right) \varepsilon
^{2}v_{1}\left( x\right) v_{2}\left( \frac{x}{\varepsilon }\right) \cdot
v_{3}\left( \frac{x}{\varepsilon ^{2}}\right) c_{1}\left( t\right)
c_{2}\left( \frac{t}{\varepsilon ^{q}}\right) c_{3}\left( \frac{t}{%
\varepsilon ^{r}}\right) dxdt \\
=-\int_{\Omega _{T}}u_{\varepsilon }\left( x,t\right) \left( \varepsilon
^{2}\nabla v_{1}\left( x\right) v_{2}\left( \frac{x}{\varepsilon }\right)
\cdot v_{3}\left( \frac{x}{\varepsilon ^{2}}\right) +\varepsilon v_{1}\left(
x\right) \nabla _{y_{1}}v_{2}\left( \frac{x}{\varepsilon }\right) \cdot
v_{3}\left( \frac{x}{\varepsilon ^{2}}\right) \right. \\
+\left. v_{1}\left( x\right) v_{2}\left( \frac{x}{\varepsilon }\right)
\nabla _{y_{2}}\cdot v_{3}\left( \frac{x}{\varepsilon ^{2}}\right) \right)
c_{1}\left( t\right) c_{2}\left( \frac{t}{\varepsilon ^{q}}\right)
c_{3}\left( \frac{t}{\varepsilon ^{r}}\right) dxdt\text{,}
\end{gather*}%
where we have applied integration by parts and carried out the
differentiation process. As $\varepsilon \rightarrow 0$, $\left\{
\varepsilon ^{2}\nabla u_{\varepsilon }\right\} $ approaches $0$ due to
boundedness of $\left\{ \nabla u_{\varepsilon }\right\} $ and we obtain%
\begin{gather*}
\lim_{\varepsilon \rightarrow 0}\int_{\Omega _{T}}-u_{\varepsilon }\left(
x,t\right) \left( \varepsilon ^{2}\nabla v_{1}\left( x\right) v_{2}\left( 
\frac{x}{\varepsilon }\right) \cdot v_{3}\left( \frac{x}{\varepsilon ^{2}}%
\right) +\varepsilon v_{1}\left( x\right) \nabla _{y_{1}}v_{2}\left( \frac{x%
}{\varepsilon }\right) \cdot v_{3}\left( \frac{x}{\varepsilon ^{2}}\right)
\right. \\
+\left. v_{1}\left( x\right) v_{2}\left( \frac{x}{\varepsilon }\right)
\nabla _{y_{2}}\cdot v_{3}\left( \frac{x}{\varepsilon ^{2}}\right) \right)
c_{1}\left( t\right) c_{2}\left( \frac{t}{\varepsilon ^{q}}\right)
c_{3}\left( \frac{t}{\varepsilon ^{r}}\right) dxdt=0
\end{gather*}%
and since all but the third term vanish, (\ref{u0}) gives%
\begin{gather*}
\int_{\Omega _{T}}\int_{\mathcal{Y}_{2,2}}-u_{0}\left(
x,t,y^{2},s^{2}\right) v_{1}\left( x\right) v_{2}\left( y_{1}\right) \nabla
_{y_{2}}\cdot v_{3}\left( y_{2}\right) \\
\times c_{1}\left( t\right) c_{2}\left( s_{1}\right) c_{3}\left(
s_{2}\right) dy^{2}ds^{2}dxdt=0\text{.}
\end{gather*}%
Applying the Variational Lemma we have%
\begin{equation*}
-\int_{Y_{2}}u_{0}\left( x,t,y^{2},s^{2}\right) \nabla _{y_{2}}\cdot
v_{3}\left( y_{2}\right) \emph{d}y_{2}=0
\end{equation*}%
a.e. in $\Omega _{T}\times \mathcal{Y}_{1,2}$, showing that $u_{0}$ is
independent of $y_{2}$. Next we let $v_{1}\in \emph{D}(\Omega )$, $v_{2}\in
C_{\sharp }^{\infty }(Y_{1})^{N}$, $c_{1}\in \emph{D}(0,T),\ c_{2}\in
C_{\sharp }^{\infty }(S_{1})$ and $c_{3}\in C_{\sharp }^{\infty }(S_{2}).$
By integration by parts and after differentiation we have that%
\begin{gather*}
\int_{\Omega _{T}}\nabla u_{\varepsilon }\left( x,t\right) \varepsilon
v_{1}\left( x\right) \cdot v_{2}\left( \frac{x}{\varepsilon }\right)
c_{1}\left( t\right) c_{2}\left( \frac{t}{\varepsilon ^{q}}\right)
c_{3}\left( \frac{t}{\varepsilon ^{r}}\right) \emph{d}x\emph{d}t \\
=-\int_{\Omega _{T}}u_{\varepsilon }\left( x,t\right) \left( \varepsilon
\nabla v_{1}\left( x\right) \cdot v_{2}\left( \frac{x}{\varepsilon }\right)
+v_{1}\left( x\right) \nabla _{y_{1}}\cdot v_{2}\left( \frac{x}{\varepsilon }%
\right) \right) \\
\times c_{1}\left( t\right) c_{2}\left( \frac{t}{\varepsilon ^{q}}\right)
c_{3}\left( \frac{t}{\varepsilon ^{r}}\right) \emph{d}x\emph{d}t
\end{gather*}%
and as $\varepsilon \rightarrow 0$ we obtain%
\begin{equation*}
\int_{\Omega _{T}}\int_{\mathcal{Y}_{1,2}}-u_{0}\left(
x,t,y_{1},s^{2}\right) v_{1}\left( x\right) \nabla _{y_{1}}\cdot v_{2}\left(
y_{1}\right) c_{1}\left( t\right) c_{2}\left( s_{1}\right) c_{3}\left(
s_{2}\right) \emph{d}y_{1}\emph{d}s^{2}\emph{d}x\emph{d}t=0\text{.}
\end{equation*}%
By the Variational Lemma%
\begin{equation*}
-\int_{Y_{1}}u_{0}\left( x,t,y_{1},s^{2}\right) \nabla _{y_{1}}\cdot
v_{2}\left( y_{1}\right) \emph{d}y_{1}=0
\end{equation*}%
a.e. in $\Omega _{T}\times S^{2}$, which shows that $u_{0}$ is independent
of $y_{1}$. To show independence of $s_{2}$ we carry out the
differentiations in (\ref{villkor1}) and obtain%
\begin{gather*}
\lim_{\varepsilon \rightarrow 0}\int_{\Omega _{T}}u_{\varepsilon }\left(
x,t\right) v\left( x\right) \left( \varepsilon ^{r}\partial _{t}c_{1}\left(
t\right) c_{2}\left( \frac{t}{\varepsilon ^{q}}\right) c_{3}\left( \frac{t}{%
\varepsilon ^{r}}\right) \right. \\
+\left. \varepsilon ^{r-q}c_{1}\left( t\right) \partial _{s_{1}}c_{2}\left( 
\frac{t}{\varepsilon ^{q}}\right) c_{3}\left( \frac{t}{\varepsilon ^{r}}%
\right) +\varepsilon ^{r-r}c_{1}\left( t\right) c_{2}\left( \frac{t}{%
\varepsilon ^{q}}\right) \partial _{s_{2}}c_{3}\left( \frac{t}{\varepsilon
^{r}}\right) \right) \emph{d}x\emph{d}t=0\text{.}
\end{gather*}%
Passing to the limit we arrive at%
\begin{equation*}
\int_{\Omega _{T}}\int_{S^{2}}u_{0}\left( x,t,s^{2}\right) v\left( x\right)
c_{1}\left( t\right) c_{2}\left( s_{1}\right) \partial _{s_{2}}c_{3}\left(
s_{2}\right) \emph{d}s^{2}\emph{d}x\emph{d}t=0
\end{equation*}%
and the Variational Lemma gives%
\begin{equation*}
\int_{S_{2}}u_{0}\left( x,t,s^{2}\right) \partial _{s_{2}}c_{3}\left(
s_{2}\right) \emph{d}s_{2}=0
\end{equation*}%
a.e. in $\Omega _{T}\times S_{1}$. We conclude that $u_{0}$ does not depend
on the local time variable $s_{2}$. For showing independence of $s_{1}$ we
carry out the differentiations in (\ref{villkor2}) and obtain%
\begin{equation*}
\lim_{\varepsilon \rightarrow 0}\int_{\Omega _{T}}u_{\varepsilon }\left(
x,t\right) v\left( x\right) \left( \varepsilon ^{q}\partial _{t}c_{1}\left(
t\right) c_{2}\left( \frac{t}{\varepsilon ^{q}}\right) +\varepsilon
^{q-q}c_{1}\left( t\right) \partial _{s_{1}}c_{2}\left( \frac{t}{\varepsilon
^{q}}\right) \right) \emph{d}x\emph{d}t=0\text{.}
\end{equation*}%
As $\varepsilon $ tends to zero we have%
\begin{equation*}
\int_{\Omega _{T}}\int_{S_{1}}u_{0}\left( x,t,s_{1}\right) v\left( x\right)
c_{1}\left( t\right) \partial _{s_{1}}c_{2}\left( s_{1}\right) \emph{d}s_{1}%
\emph{d}x\emph{d}t=0
\end{equation*}%
and by the Variational Lemma%
\begin{equation*}
\int_{S_{1}}u_{0}\left( x,t,s_{1}\right) \partial _{s_{1}}c_{2}\left(
s_{1}\right) \emph{d}s_{1}=0
\end{equation*}%
a.e. in $\Omega _{T}$, hence $u_{0}$ is independent of $s_{1}$. In
conclusion, we have shown that%
\begin{equation}
u_{\varepsilon }\left( x,t\right) \overset{3,3}{\rightharpoonup }u_{0}\left(
x,t\right) \text{,}  \label{u0-2}
\end{equation}%
where $u_{0}\in L^{2}(\Omega _{T})$, and the last step in the
characterization of $u_{0}$ is to show that $u_{0}\in
L^{2}(0,T;H_{0}^{1}(\Omega ))$. Observe that (\ref{u0-2}) means%
\begin{gather*}
\lim_{\varepsilon \rightarrow 0}\int_{\Omega _{T}}u_{\varepsilon }\left(
x,t\right) v\left( x,t,\frac{x}{\varepsilon },\frac{x}{\varepsilon ^{2}},%
\frac{t}{\varepsilon ^{q}},\frac{t}{\varepsilon ^{r}}\right) \emph{d}x\emph{d%
}t \\
=\int_{\Omega _{T}}\int_{\mathcal{Y}_{2,2}}u_{0}\left( x,t\right) v\left(
x,t,y^{2},s^{2}\right) \emph{d}y^{2}\emph{d}s^{2}\emph{d}x\emph{d}t
\end{gather*}%
for all $v\in L^{2}(\Omega _{T};C_{\sharp }(\mathcal{Y}_{2,2}))$ and since $%
L^{2}(\Omega _{T})\subset L^{2}(\Omega _{T};C_{\sharp }(\mathcal{Y}_{2,2}))$
it follows that%
\begin{gather*}
\lim_{\varepsilon \rightarrow 0}\int_{\Omega _{T}}u_{\varepsilon }\left(
x,t\right) v\left( x,t\right) \emph{d}x\emph{d}t=\int_{\Omega _{T}}\int_{%
\mathcal{Y}_{2,2}}u_{0}\left( x,t\right) v\left( x,t\right) \emph{d}y^{2}%
\emph{d}s^{2}\emph{d}x\emph{d}t \\
=\int_{\Omega _{T}}u_{0}\left( x,t\right) v\left( x,t\right) \emph{d}x\emph{d%
}t\text{,}
\end{gather*}%
for all $v\in L^{2}(\Omega _{T})$. Observing that the weak convergence (\ref%
{svag konvergens}) implies%
\begin{equation*}
u_{\varepsilon }\left( x,t\right) \rightharpoonup u\left( x,t\right) \text{
in }L^{2}(\Omega _{T})
\end{equation*}%
for the same $u\in L^{2}(0,T;H_{0}^{1}(\Omega ))$ we see that $u_{0}$
coincides with the weak limit $u$, hence $u_{0}\in
L^{2}(0,T;H_{0}^{1}(\Omega ))$ and the proof of (\ref{3,3}) is complete.

Now we will identify $\tau _{0}$. Let $H$ denote the space of generalized
divergence-free functions in $L^{2}(\Omega ;L_{\sharp }^{2}(Y^{2})^{N})$
defined as%
\begin{equation*}
H=\left\{ v\in L^{2}(\Omega ;L_{\sharp }^{2}(Y^{2})^{N}):\nabla
_{y_{2}}\cdot v\left( x,y^{2}\right) =0\text{ and }\int_{Y_{2}}\nabla
_{y_{1}}\cdot v\left( x,y^{2}\right) \emph{d}y_{2}=0\right\} \text{.}
\end{equation*}%
Using $vc$, where $v\in \emph{D}(\Omega ;C_{\sharp }^{\infty
}(Y^{2}))^{N}\cap H$ and $c\in \emph{D}(0,T;C_{\sharp }^{\infty }(S^{2}))$,
as a test function in (\ref{tau}) we get, up to a subsequence,%
\begin{gather*}
\lim_{\varepsilon \rightarrow 0}\int_{\Omega _{T}}\nabla u_{\varepsilon
}\left( x,t\right) \cdot v\left( x,\frac{x}{\varepsilon },\frac{x}{%
\varepsilon ^{2}}\right) c\left( t,\frac{t}{\varepsilon ^{q}},\frac{t}{%
\varepsilon ^{r}}\right) \emph{d}x\emph{d}t \\
=\int_{\Omega _{T}}\int_{\mathcal{Y}_{2,2}}\tau _{0}\left(
x,t,y^{2},s^{2}\right) \cdot v\left( x,y^{2}\right) c\left( t,s^{2}\right) 
\emph{d}y^{2}\emph{d}s^{2}\emph{d}x\emph{d}t\text{,}
\end{gather*}%
for some $\tau _{0}\in L^{2}(\Omega _{T}\times \mathcal{Y}_{2,2})^{N}$. By
integration by parts in the left-hand side we obtain%
\begin{gather*}
\lim_{\varepsilon \rightarrow 0}\int_{\Omega _{T}}-u_{\varepsilon }\left(
x,t\right) \nabla \cdot v\left( x,\frac{x}{\varepsilon },\frac{x}{%
\varepsilon ^{2}}\right) c\left( t,\frac{t}{\varepsilon ^{q}},\frac{t}{%
\varepsilon ^{r}}\right) \emph{d}x\emph{d}t \\
=\lim_{\varepsilon \rightarrow 0}\int_{\Omega _{T}}-u_{\varepsilon }\left(
x,t\right) \left( \nabla _{x}\cdot v\left( x,\frac{x}{\varepsilon },\frac{x}{%
\varepsilon ^{2}}\right) +\varepsilon ^{-1}\nabla _{y_{1}}\cdot v\left( x,%
\frac{x}{\varepsilon },\frac{x}{\varepsilon ^{2}}\right) \right. \\
+\left. \varepsilon ^{-2}\nabla _{y_{2}}\cdot v\left( x,\frac{x}{\varepsilon 
},\frac{x}{\varepsilon ^{2}}\right) \right) c\left( t,\frac{t}{\varepsilon
^{q}},\frac{t}{\varepsilon ^{r}}\right) \emph{d}x\emph{d}t \\
=\lim_{\varepsilon \rightarrow 0}\int_{\Omega _{T}}-u_{\varepsilon }\left(
x,t\right) \left( \nabla _{x}\cdot v\left( x,\frac{x}{\varepsilon },\frac{x}{%
\varepsilon ^{2}}\right) +\varepsilon ^{-1}\nabla _{y_{1}}\cdot v\left( x,%
\frac{x}{\varepsilon },\frac{x}{\varepsilon ^{2}}\right) \right) \\
\times c\left( t,\frac{t}{\varepsilon ^{q}},\frac{t}{\varepsilon ^{r}}%
\right) \emph{d}x\emph{d}t\text{,}
\end{gather*}%
where the last term has vanished due to the fact that $\nabla _{y_{2}}\cdot
v=0$. Since%
\begin{equation*}
\int_{Y_{2}}\nabla _{y_{1}}\cdot v\left( x,y^{2}\right) \emph{d}y_{2}=0\text{%
,}
\end{equation*}%
Theorem 3.3 in \cite{AlBr} gives that $\left\{ \varepsilon ^{-2}\nabla
_{y_{1}}\cdot v\left( x,x/\varepsilon ,x/\varepsilon ^{2}\right) \right\} $
is bounded in $H^{-1}(\Omega )$. Passing to the limit while using this
boundedness yields%
\begin{gather*}
\int_{\Omega _{T}}\int_{\mathcal{Y}_{2,2}}-u\left( x,t\right) \nabla
_{x}\cdot v\left( x,y^{2}\right) c\left( t,s^{2}\right) \emph{d}y^{2}\emph{d}%
s^{2}\emph{d}x\emph{d}t \\
=\int_{\Omega _{T}}\int_{\mathcal{Y}_{2,2}}\nabla u\left( x,t\right) \cdot
v\left( x,y^{2}\right) c\left( t,s^{2}\right) \emph{d}y^{2}\emph{d}s^{2}%
\emph{d}x\emph{d}t\text{,}
\end{gather*}%
for all $v\in \emph{D}(\Omega ;C_{\sharp }^{\infty }(Y^{2}))^{N}\cap H$ and $%
c\in \emph{D}(0,T;C_{\sharp }^{\infty }(S^{2}))$. We conclude that%
\begin{gather*}
\int_{\Omega _{T}}\int_{\mathcal{Y}_{2,2}}\tau _{0}\left(
x,t,y^{2},s^{2}\right) \cdot v\left( x,y^{2}\right) c\left( t,s^{2}\right) 
\emph{d}y^{2}\emph{d}s^{2}\emph{d}x\emph{d}t \\
=\int_{\Omega _{T}}\int_{\mathcal{Y}_{2,2}}\nabla u\left( x,t\right) \cdot
v\left( x,y^{2}\right) c\left( t,s^{2}\right) \emph{d}y^{2}\emph{d}s^{2}%
\emph{d}x\emph{d}t
\end{gather*}%
or equivalently%
\begin{equation*}
\int_{\Omega _{T}}\int_{\mathcal{Y}_{2,2}}\left( \tau _{0}\left(
x,t,y^{2},s^{2}\right) -\nabla u\left( x,t\right) \right) \cdot v\left(
x,y^{2}\right) c\left( t,s^{2}\right) \emph{d}y^{2}\emph{d}s^{2}\emph{d}x%
\emph{d}t=0\text{.}
\end{equation*}%
By the Variational Lemma we obtain%
\begin{equation*}
\int_{\Omega }\int_{Y^{2}}\left( \tau _{0}\left( x,t,y^{2},s^{2}\right)
-\nabla u\left( x,t\right) \right) \cdot v\left( x,y^{2}\right) \emph{d}y^{2}%
\emph{d}x=0\text{,}
\end{equation*}%
a.e. in $\left( 0,T\right) \times S^{2}$. This means that $\tau _{0}-\nabla
u $ belongs to the orthogonal of $\emph{D}(\Omega ;C_{\sharp }^{\infty
}(Y^{2}))^{N}\cap H$ and by density (see property $\left( i\right) $ of
Lemma 3.7 in \cite{AlBr}) to the orthogonal of the whole space $H$.
According to property $(ii)$ of Lemma 3.7 in \cite{AlBr}, we deduce that%
\begin{equation*}
\tau _{0}\left( x,t,y^{2},s^{2}\right) -\nabla u\left( x,t\right) =\nabla
_{y_{1}}u_{1}\left( x,t,y_{1},s^{2}\right) +\nabla _{y_{2}}u_{2}\left(
x,t,y^{2},s^{2}\right)
\end{equation*}%
where $u_{1}\in L^{2}(\Omega _{T}\times S^{2};H_{\sharp }^{1}(Y_{1})/%
\mathbb{R}
)$ and $u_{2}\in L^{2}(\Omega _{T}\times \mathcal{Y}_{1,2};H_{\sharp
}^{1}(Y_{2})/%
\mathbb{R}
)$, which proves (\ref{gradsplit2}).

Now, choosing a test function $v\in L^{2}(\Omega _{T};C_{\sharp }(\mathcal{Y}%
_{1,2}))$ in the left-hand side of (\ref{gradsplit1}), (\ref{gradsplit2})
gives%
\begin{gather*}
\lim_{\varepsilon \rightarrow 0}\int_{\Omega _{T}}\nabla u_{\varepsilon
}\left( x,t\right) v\left( x,t,\frac{x}{\varepsilon },\frac{t}{\varepsilon
^{q}},\frac{t}{\varepsilon ^{r}}\right) \emph{d}x\emph{d}t \\
=\int_{\Omega _{T}}\int_{\mathcal{Y}_{2,2}}\left( \nabla u\left( x,t\right)
+\nabla _{y_{1}}u_{1}\left( x,t,y_{1},s^{2}\right) +\nabla
_{y_{2}}u_{2}\left( x,t,y^{2},s^{2}\right) \right) \\
\times v\left( x,t,y_{1},s^{2}\right) \emph{d}y^{2}\emph{d}s^{2}\emph{d}x%
\emph{d}t\text{.}
\end{gather*}%
Integrating over $Y_{2}$ while using the fact that%
\begin{equation*}
\int_{Y_{2}}\nabla _{y_{2}}u_{2}\left( x,t,y^{2},s^{2}\right) \emph{d}y_{2}=0
\end{equation*}%
we arrive at%
\begin{equation*}
\int_{\Omega _{T}}\int_{\mathcal{Y}_{1,2}}\left( \nabla u\left( x,t\right)
+\nabla _{y_{1}}u_{1}\left( x,t,y_{1},s^{2}\right) \right) v\left(
x,t,y_{1},s^{2}\right) \emph{d}y_{1}\emph{d}s^{2}\emph{d}x\emph{d}t\text{,}
\end{equation*}%
which proves (\ref{gradsplit1}).
\end{proof}

In the case of appearance of sequences that are not bounded in any Lebesgue
space, it might not be possible to obtain a multiscale limit. In \cite{Hol},
Holmbom introduced a concept of convergence that was improved by Nguetseng
and Woukeng in \cite{NgWo} and further developed and named very weak
multiscale convergence in \cite{FHOPveryweak}. The full generalization of
the concept was given in \cite{FHOLParbitrary}, for which we provide the
definition. This kind of convergence is crucial in the homogenization of (%
\ref{problem}), where unbounded sequences appear.

\begin{definition}
A sequence $\left\{ w_{\varepsilon }\right\} $ in $L^{1}(\Omega _{T})$ is
said to $(n+1,m+1)$-scale converge very weakly to $w_{0}\in L^{1}(\Omega
_{T}\times \mathcal{Y}_{n,m})$ if%
\begin{gather*}
\lim_{\varepsilon \rightarrow 0}\int_{\Omega _{T}}w_{\varepsilon }\left(
x,t\right) v_{1}\left( x,\frac{x}{\varepsilon _{1}},\ldots ,\frac{x}{%
\varepsilon _{n-1}}\right) v_{2}\left( \frac{x}{\varepsilon _{n}}\right)
c\left( t,\frac{t}{\varepsilon _{1}^{\prime }},\ldots ,\frac{t}{\varepsilon
_{m}^{\prime }}\right) dxdt \\
=\int_{\Omega _{T}}\int_{\mathcal{Y}_{n,m}}w_{0}\left(
x,t,y^{n},s^{m}\right) v_{1}\left( x,y^{n-1}\right) v_{2}\left( y_{n}\right)
c(t,s^{m})dy^{n}ds^{m}dxdt
\end{gather*}%
for any $v_{1}\in D(\Omega ;C_{\sharp }^{\infty }(Y^{n-1})),$ $v_{2}\in
C_{\sharp }^{\infty }(Y_{n})/%
\mathbb{R}
$ and $c\in D(0,T;C_{\sharp }^{\infty }(S^{m}))$, where 
\begin{equation}
\int_{Y_{n}}w_{0}\left( x,t,y^{n},s^{m}\right) dy_{n}=0\text{.}
\label{very weak unik}
\end{equation}%
We write%
\begin{equation*}
w_{\varepsilon }\left( x,t\right) \underset{vw}{\overset{n+1,m+1}{%
\rightharpoonup }}w_{0}\left( x,t,y^{n},s^{m}\right) \text{.}
\end{equation*}
\end{definition}

\begin{remark}
Due to (\ref{very weak unik}) the limit is unique.
\end{remark}

In earlier works, see e.g. \cite{PerPhD} or \cite{FHOLParbitrary},
compactness results for very weak evolution multiscale convergence for $%
\left\{ u_{\varepsilon }\right\} $ bounded in $W^{1,2}(0,T;H_{0}^{1}(\Omega
),L^{2}(\Omega ))$ have been established. Here, we will prove analogous
results without requiring boundedness of the time derivative in $%
L^{2}(0,T;H^{-1}(\Omega ))$. Note that the conditions (\ref{villkor1-2}) and
(\ref{villkor2-2}) are the same as (\ref{villkor1}) and (\ref{villkor2}) in
Theorem \ref{Theorem - gradient charact}.

\begin{theorem}
\label{Theorem - Compactresult vw} Let $\left\{ u_{\varepsilon }\right\} $
be a bounded sequence in $L^{2}(0,T;H_{0}^{1}(\Omega ))$ and, for any $v\in
D(\Omega )$, $c_{1}\in D(0,T)$, $c_{2}\in C_{\sharp }^{\infty }(S_{1})$, $%
c_{3}\in C_{\sharp }^{\infty }(S_{2})$ and $r>q>0$,%
\begin{equation}
\lim_{\varepsilon \rightarrow 0}\int_{\Omega _{T}}u_{\varepsilon }\left(
x,t\right) v\left( x\right) \partial _{t}\left( \varepsilon ^{r}c_{1}\left(
t\right) c_{2}\left( \frac{t}{\varepsilon ^{q}}\right) c_{3}\left( \frac{t}{%
\varepsilon ^{r}}\right) \right) dxdt=0  \label{villkor1-2}
\end{equation}%
and%
\begin{equation}
\lim_{\varepsilon \rightarrow 0}\int_{\Omega _{T}}u_{\varepsilon }\left(
x,t\right) v\left( x\right) \partial _{t}\left( \varepsilon ^{q}c_{1}\left(
t\right) c_{2}\left( \frac{t}{\varepsilon ^{q}}\right) \right) dxdt=0\text{.}
\label{villkor2-2}
\end{equation}%
Then, with $\varepsilon _{1}=\varepsilon $, $\varepsilon _{2}=\varepsilon
^{2}$, $\varepsilon _{1}^{\prime }=\varepsilon ^{q}$ and $\varepsilon
_{2}^{\prime }=\varepsilon ^{r}$, up to a subsequence%
\begin{equation}
\varepsilon ^{-1}u_{\varepsilon }\left( x,t\right) \underset{vw}{\overset{2,3%
}{\rightharpoonup }}u_{1}\left( x,t,y_{1},s^{2}\right)  \label{vw1}
\end{equation}%
and%
\begin{equation}
\varepsilon ^{-2}u_{\varepsilon }\left( x,t\right) \underset{vw}{\overset{3,3%
}{\rightharpoonup }}u_{2}\left( x,t,y^{2},s^{2}\right) \text{,}  \label{vw2}
\end{equation}%
where $u_{1}\in L^{2}(\Omega _{T}\times S^{2};H_{\sharp }^{1}(Y_{1})/%
\mathbb{R}
)$ and $u_{2}\in L^{2}(\Omega _{T}\times \mathcal{Y}_{1,2};H_{\sharp
}^{1}(Y_{2})/%
\mathbb{R}
)$ are the same as in (\ref{gradsplit1}) and (\ref{gradsplit2}) in Theorem %
\ref{Theorem - gradient charact}.
\end{theorem}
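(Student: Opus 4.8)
The plan is to establish the very weak limits (\ref{vw1}) and (\ref{vw2}) by testing the rescaled sequences against suitable oscillating test functions and reducing, via integration by parts, everything to the already-established gradient characterizations (\ref{gradsplit1}) and (\ref{gradsplit2}) from Theorem \ref{Theorem - gradient charact}. The key observation is that for a test function of the form $v_1(x,x/\varepsilon)v_2(x/\varepsilon^2)c(t,t/\varepsilon^q,t/\varepsilon^r)$ with the requisite zero-mean conditions, differentiating the rapidly oscillating factors produces leading-order terms carrying factors $\varepsilon^{-1}$ or $\varepsilon^{-2}$ that exactly cancel the $\varepsilon^{-1}$ or $\varepsilon^{-2}$ prefactors on $u_\varepsilon$. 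Thus I would not attempt to bound the rescaled sequences in any Lebesgue space directly, but rather manipulate the integrals so that the limit of $\nabla u_\varepsilon$ (tested against suitable functions) appears.

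Concretely, for (\ref{vw2}) I would start from a test function $v_1(x,x/\varepsilon)v_2(x/\varepsilon^2)c(t,t/\varepsilon^q,t/\varepsilon^r)$ with $v_2\in C_\sharp^\infty(Y_2)/\mathbb{R}$ (so $\int_{Y_2}v_2\,dy_2=0$), and consider $\int_{\Omega_T}\varepsilon^{-2}u_\varepsilon\,v_1 v_2\,c\,dxdt$. Since $v_2$ has zero mean over $Y_2$, it admits a representation $v_2=\nabla_{y_2}\cdot V_2$ for some vector potential, allowing me to rewrite the integrand as $\varepsilon^{-2}u_\varepsilon$ times a $y_2$-divergence. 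Integrating by parts in the $x$-variable transfers the derivative onto $u_\varepsilon$: the $\nabla_{y_2}$ acting on $V_2(x/\varepsilon^2)$ generates a factor $\varepsilon^{-2}$, which combines with $\varepsilon^{-2}u_\varepsilon$ to give $\varepsilon^{-2}\cdot\varepsilon^{2}=1$ balancing against $\nabla u_\varepsilon$ — more carefully, one integrates by parts to move the spatial gradient onto $u_\varepsilon$, producing $\nabla u_\varepsilon\cdot(\text{something})$ at order $\varepsilon^0$ while the lower-order terms in $\varepsilon$ vanish. Applying (\ref{gradsplit2}) then identifies the limit as the one involving $\nabla_{y_2}u_2$, and the zero-mean normalization (\ref{very weak unik}) pins down $u_2$ uniquely as the same function appearing in (\ref{gradsplit2}). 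The argument for (\ref{vw1}) is analogous but one scale "coarser": one tests $\varepsilon^{-1}u_\varepsilon$ against $v_1(x)v_2(x/\varepsilon)c(t,\cdots)$ with $v_2\in C_\sharp^\infty(Y_1)/\mathbb{R}$, integrates by parts so that the $\varepsilon^{-1}$ from differentiating $v_2(x/\varepsilon)$ cancels the prefactor, and uses (\ref{gradsplit1}) (obtained by integrating (\ref{gradsplit2}) over $Y_2$) to recover $\nabla_{y_1}u_1$.

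The main obstacle I anticipate is the careful bookkeeping of the error terms generated by the product rule when differentiating the composite test function, and justifying that each term carrying a positive power of $\varepsilon$ genuinely vanishes in the limit. In particular, after integration by parts the cross terms (e.g. $\nabla_x v_1$ times the oscillating factors, or mixed $\varepsilon^{-1}\nabla_{y_1}$ terms in the $(3,3)$ case) must be controlled using only the boundedness of $\{u_\varepsilon\}$ in $L^2(0,T;H_0^1(\Omega))$, together with the fact that $\{u_\varepsilon\}$ itself $(3,3)$-scale converges to $u$ by (\ref{3,3}). The subtle point is that $\varepsilon^{-2}u_\varepsilon$ is genuinely unbounded, so one can never pass to the limit in the product directly; the entire argument hinges on performing the integration by parts \emph{before} attempting any limit, so that the surviving integrand is expressed through $\nabla u_\varepsilon$ (which is bounded and multiscale convergent) rather than through the unbounded rescaling. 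Once this structural rearrangement is in place, the passage to the limit is routine and the identification of $u_1$ and $u_2$ with the functions from Theorem \ref{Theorem - gradient charact} follows from uniqueness of the very weak limit.
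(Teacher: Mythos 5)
Your plan is structurally the same as the paper's proof: represent the zero-mean factor $v_{2}$ as a divergence (the paper takes $v_{2}=\Delta _{y_{1}}\rho $, resp. $v_{2}=\Delta _{y_{2}}\rho $, so that the vector potential is automatically a gradient of a periodic function), use the chain rule to absorb the prefactor $\varepsilon ^{-1}$, resp. $\varepsilon ^{-2}$, into a global divergence, integrate by parts onto $u_{\varepsilon }$, pass to the limit with Theorem \ref{Theorem - gradient charact}, and integrate back in the local variable to recover $u_{1}$, resp. $u_{2}$, against $v_{2}$. For (\ref{vw1}) this goes through exactly as you describe: after the rearrangement every surviving term is of order $\varepsilon ^{0}$, the limit follows from (\ref{gradsplit1}) and (\ref{3,3}), and the $\nabla u$ contributions cancel by one more integration by parts in $x$ (or, equivalently, because $\int_{Y_{1}}\nabla _{y_{1}}\rho \,dy_{1}=0$).

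For (\ref{vw2}), however, there is a genuine gap at precisely the step you first flag as the main obstacle and then declare ``routine''. After the integration by parts, the integrand still contains the mixed term $\varepsilon ^{-1}u_{\varepsilon }\left( x,t\right) \nabla _{y_{1}}v_{1}\left( x,x/\varepsilon \right) \cdot \nabla _{y_{2}}\rho \left( x/\varepsilon ^{2}\right) c$, which carries a negative power of $\varepsilon $; it is not covered by (\ref{gradsplit2}), and the tools you name do not dispose of it. Knowing from $(3,3)$-scale convergence of $\left\{ u_{\varepsilon }\right\} $ and the zero $Y_{2}$-mean of $\nabla _{y_{2}}\rho $ that the \emph{unweighted} integral tends to zero says nothing about $\varepsilon ^{-1}$ times that integral: qualitative convergence gives no rate. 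The paper handles this quantitatively: since $\nabla _{y_{1}}v_{1}\left( x,y_{1}\right) \cdot \nabla _{y_{2}}\rho \left( y_{2}\right) $ has zero mean in $y_{2}$, Theorem 3.3 of \cite{AlBr} shows that $\left\{ \varepsilon ^{-2}\nabla _{y_{1}}v_{1}\left( x,x/\varepsilon \right) \cdot \nabla _{y_{2}}\rho \left( x/\varepsilon ^{2}\right) \right\} $ is bounded in $H^{-1}(\Omega )$, so pairing it with $u_{\varepsilon }$, bounded in $L^{2}(0,T;H_{0}^{1}(\Omega ))$, the whole term is $\varepsilon \cdot O(1)\rightarrow 0$. (An alternative within your stated toolkit does exist --- write $\nabla _{y_{2}}\rho \left( x/\varepsilon ^{2}\right) =\varepsilon ^{2}\nabla \left[ \rho \left( x/\varepsilon ^{2}\right) \right] $, integrate by parts once more, and use $\int_{Y_{1}}\Delta _{y_{1}}v_{1}\,dy_{1}=0$ together with (\ref{3,3}) --- but some such quantitative device must be supplied; it is not automatic.) A second, smaller point: if you work with a general vector potential $V_{2}$ satisfying $\nabla _{y_{2}}\cdot V_{2}=v_{2}$ rather than $V_{2}=\nabla _{y_{2}}\rho $, you must additionally normalize $\int_{Y_{2}}V_{2}\,dy_{2}=0$; this zero mean is what makes the $\nabla _{y_{1}}u_{1}$ contribution drop out of the limit so that only the $\nabla _{y_{2}}u_{2}$ term survives, and it is automatic for the gradient choice by periodicity but not for an arbitrary potential.
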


\begin{proof}
We point out that the task to prove (\ref{vw1}) and (\ref{vw2}) is to show%
\begin{gather}
\lim_{\varepsilon \rightarrow 0}\int_{\Omega _{T}}\varepsilon
^{-1}u_{\varepsilon }\left( x,t\right) v_{1}\left( x\right) v_{2}\left( 
\frac{x}{\varepsilon }\right) c\left( t,\frac{t}{\varepsilon ^{q}},\frac{t}{%
\varepsilon ^{r}}\right) \emph{d}x\emph{d}t  \label{vw1-2} \\
=\int_{\Omega _{T}}\int_{\mathcal{Y}_{1,2}}u_{1}\left(
x,t,y_{1},s^{2}\right) v_{1}\left( x\right) v_{2}\left( y_{1}\right) c\left(
t,s^{2}\right) \emph{d}y_{1}\emph{d}s^{2}\emph{d}x\emph{d}t\text{,}  \notag
\end{gather}%
for any $v_{1}\in \emph{D}(\Omega )$, $v_{2}\in C_{\sharp }^{\infty }(Y_{1})/%
\mathbb{R}
$ and $c\in \emph{D}(0,T;C_{\sharp }^{\infty }(S^{2}))$, and%
\begin{gather}
\lim_{\varepsilon \rightarrow 0}\int_{\Omega _{T}}\varepsilon
^{-2}u_{\varepsilon }\left( x,t\right) v_{1}\left( x,\frac{x}{\varepsilon }%
\right) v_{2}\left( \frac{x}{\varepsilon ^{2}}\right) c\left( t,\frac{t}{%
\varepsilon ^{q}},\frac{t}{\varepsilon ^{r}}\right) \emph{d}x\emph{d}t
\label{vw2-2} \\
=\int_{\Omega _{T}}\int_{\mathcal{Y}_{2,2}}u_{2}\left(
x,t,y^{2},s^{2}\right) v_{1}\left( x,y_{1}\right) v_{2}\left( y_{2}\right)
c\left( t,s^{2}\right) \emph{d}y^{2}\emph{d}s^{2}\emph{d}x\emph{d}t\text{,} 
\notag
\end{gather}%
for any $v_{1}\in \emph{D}(\Omega ;C_{\sharp }^{\infty }(Y_{1}))$, $v_{2}\in
C_{\sharp }^{\infty }(Y_{2})/%
\mathbb{R}
$ and $c\in \emph{D}(0,T;C_{\sharp }^{\infty }(S^{2}))$, respectively.

We start by proving (\ref{vw1}). Note that any $v_{2}\in C_{\sharp }^{\infty
}(Y_{1})/%
\mathbb{R}
$ can be represented by%
\begin{equation*}
v_{2}\left( y_{1}\right) =\Delta _{y_{1}}\rho \left( y_{1}\right) =\nabla
_{y_{1}}\cdot \left( \nabla _{y_{1}}\rho \left( y_{1}\right) \right)
\end{equation*}%
for some $\rho \in C_{\sharp }^{\infty }(Y_{1})/%
\mathbb{R}
$. The left-hand side of (\ref{vw1-2}) can now be expressed as%
\begin{gather*}
\lim_{\varepsilon \rightarrow 0}\int_{\Omega _{T}}\varepsilon
^{-1}u_{\varepsilon }\left( x,t\right) v_{1}\left( x\right) \nabla
_{y_{1}}\cdot \left( \nabla _{y_{1}}\rho \left( \frac{x}{\varepsilon }%
\right) \right) c\left( t,\frac{t}{\varepsilon ^{q}},\frac{t}{\varepsilon
^{r}}\right) \emph{d}x\emph{d}t \\
=\lim_{\varepsilon \rightarrow 0}\int_{\Omega _{T}}u_{\varepsilon }\left(
x,t\right) v_{1}\left( x\right) \nabla \cdot \left( \nabla _{y_{1}}\rho
\left( \frac{x}{\varepsilon }\right) \right) c\left( t,\frac{t}{\varepsilon
^{q}},\frac{t}{\varepsilon ^{r}}\right) \emph{d}x\emph{d}t \\
=\lim_{\varepsilon \rightarrow 0}\left( \int_{\Omega _{T}}-\nabla
u_{\varepsilon }\left( x,t\right) v_{1}\left( x\right) \cdot \nabla
_{y_{1}}\rho \left( \frac{x}{\varepsilon }\right) c\left( t,\frac{t}{%
\varepsilon ^{q}},\frac{t}{\varepsilon ^{r}}\right) \emph{d}x\emph{d}t\right.
\\
-\left. \int_{\Omega _{T}}u_{\varepsilon }\left( x,t\right) \nabla
v_{1}\left( x\right) \cdot \nabla _{y_{1}}\rho \left( \frac{x}{\varepsilon }%
\right) c\left( t,\frac{t}{\varepsilon ^{q}},\frac{t}{\varepsilon ^{r}}%
\right) \emph{d}x\emph{d}t\right) \text{,}
\end{gather*}%
where we used antidifferentiation with respect to $y_{1}$ and integration by
parts. By Theorem \ref{Theorem - gradient charact}, as $\varepsilon $ tends
to zero we obtain%
\begin{gather*}
\int_{\Omega _{T}}\int_{\mathcal{Y}_{1,2}}-\left( \nabla u\left( x,t\right)
+\nabla _{y_{1}}u_{1}\left( x,t,y_{1},s^{2}\right) \right) v_{1}\left(
x\right) \cdot \nabla _{y_{1}}\rho \left( y_{1}\right) c\left(
t,s^{2}\right) \emph{d}y_{1}\emph{d}s^{2}\emph{d}x\emph{d}t \\
-\int_{\Omega _{T}}\int_{\mathcal{Y}_{1,2}}u\left( x,t\right) \nabla
v_{1}\left( x\right) \cdot \nabla _{y_{1}}\rho \left( y_{1}\right) c\left(
t,s^{2}\right) \emph{d}y_{1}\emph{d}s^{2}\emph{d}x\emph{d}t\text{.}
\end{gather*}%
Integration by parts in the last term with respect to $x$ leaves us with%
\begin{equation*}
\int_{\Omega _{T}}\int_{\mathcal{Y}_{1,2}}-\nabla _{y_{1}}u_{1}\left(
x,t,y_{1},s^{2}\right) v_{1}\left( x\right) \cdot \nabla _{y_{1}}\rho \left(
y_{1}\right) c\left( t,s^{2}\right) \emph{d}y_{1}\emph{d}s^{2}\emph{d}x\emph{%
d}t
\end{equation*}%
and by integration by parts with respect to $y_{1}$ we arrive at%
\begin{gather*}
\int_{\Omega _{T}}\int_{\mathcal{Y}_{1,2}}u_{1}\left( x,t,y_{1},s^{2}\right)
v_{1}\left( x\right) \nabla _{y_{1}}\cdot \left( \nabla _{y_{1}}\rho \left(
y_{1}\right) \right) c\left( t,s^{2}\right) \emph{d}y_{1}\emph{d}s^{2}\emph{d%
}x\emph{d}t \\
=\int_{\Omega _{T}}\int_{\mathcal{Y}_{1,2}}u_{1}\left(
x,t,y_{1},s^{2}\right) v_{1}\left( x\right) v_{2}\left( y_{1}\right) c\left(
t,s^{2}\right) \emph{d}y_{1}\emph{d}s^{2}\emph{d}x\emph{d}t\text{,}
\end{gather*}%
which proves (\ref{vw1}).

We continue by proving (\ref{vw2}). Observing that any $v_{2}\in C_{\sharp
}^{\infty }(Y_{2})/%
\mathbb{R}
$ can be expressed as%
\begin{equation*}
v_{2}\left( y_{2}\right) =\Delta _{y_{2}}\rho \left( y_{2}\right) =\nabla
_{y_{2}}\cdot \left( \nabla _{y_{2}}\rho \left( y_{2}\right) \right)
\end{equation*}%
for some $\rho \in C_{\sharp }^{\infty }(Y_{2})/%
\mathbb{R}
$, following the same steps as above the left-hand side of (\ref{vw2-2}) can
be written%
\begin{gather*}
\lim_{\varepsilon \rightarrow 0}\left( \int_{\Omega _{T}}-\nabla
u_{\varepsilon }\left( x,t\right) v_{1}\left( x,\frac{x}{\varepsilon }%
\right) \cdot \nabla _{y_{2}}\rho \left( \frac{x}{\varepsilon ^{2}}\right)
c\left( t,\frac{t}{\varepsilon ^{q}},\frac{t}{\varepsilon ^{r}}\right) \emph{%
d}x\emph{d}t\right. \\
-\left. \int_{\Omega _{T}}u_{\varepsilon }\left( x,t\right) \left( \nabla
_{x}v_{1}\left( x,\frac{x}{\varepsilon }\right) +\varepsilon ^{-1}\nabla
_{y_{1}}v_{1}\left( x,\frac{x}{\varepsilon }\right) \right) \right. \\
\cdot \left. \nabla _{y_{2}}\rho \left( \frac{x}{\varepsilon ^{2}}\right)
c\left( t,\frac{t}{\varepsilon ^{q}},\frac{t}{\varepsilon ^{r}}\right) \emph{%
d}x\emph{d}t\right) \text{.}
\end{gather*}%
Since $\left\{ \varepsilon ^{-2}\nabla _{y_{1}}v_{1}\left( x,x/\varepsilon
\right) \cdot \nabla _{y_{2}}\rho \left( x/\varepsilon ^{2}\right) \right\} $
is bounded in $H^{-1}(\Omega )$, the last term in the second integral
vanishes as we pass to the limit and, applying Theorem \ref{Theorem -
gradient charact}, we obtain%
\begin{gather*}
\int_{\Omega _{T}}\int_{\mathcal{Y}_{2,2}}-\left( \nabla u\left( x,t\right)
+\nabla _{y_{1}}u_{1}\left( x,t,y_{1},s^{2}\right) +\nabla
_{y_{2}}u_{2}\left( x,t,y^{2},s^{2}\right) \right) \\
\times v_{1}\left( x,y_{1}\right) \cdot \nabla _{y_{2}}\rho \left(
y_{2}\right) c\left( t,s^{2}\right) \emph{d}y^{2}\emph{d}s^{2}\emph{d}x\emph{%
d}t \\
-\int_{\Omega _{T}}\int_{\mathcal{Y}_{2,2}}u\left( x,t\right) \nabla
_{x}v_{1}\left( x,y_{1}\right) \cdot \nabla _{y_{2}}\rho \left( y_{2}\right)
c\left( t,s^{2}\right) \emph{d}y^{2}\emph{d}s^{2}\emph{d}x\emph{d}t\text{.}
\end{gather*}%
By observing that%
\begin{equation*}
\int_{Y_{2}}\nabla _{y_{2}}\rho \left( y_{2}\right) dy_{2}=0\text{,}
\end{equation*}%
all but the last term in the first integral vanish, leaving us with%
\begin{equation*}
\int_{\Omega _{T}}\int_{\mathcal{Y}_{2,2}}-\nabla _{y_{2}}u_{2}\left(
x,t,y^{2},s^{2}\right) v_{1}\left( x,y_{1}\right) \cdot \nabla _{y_{2}}\rho
\left( y_{2}\right) c\left( t,s^{2}\right) \emph{d}y^{2}\emph{d}s^{2}\emph{d}%
x\emph{d}t
\end{equation*}%
and integration by parts with respect to $y_{2}$ gives%
\begin{gather*}
\int_{\Omega _{T}}\int_{\mathcal{Y}_{2,2}}u_{2}\left( x,t,y^{2},s^{2}\right)
v_{1}\left( x,y_{1}\right) \nabla _{y_{2}}\cdot \left( \nabla _{y_{2}}\rho
\left( y_{2}\right) \right) c\left( t,s^{2}\right) \emph{d}y^{2}\emph{d}s^{2}%
\emph{d}x\emph{d}t \\
=\int_{\Omega _{T}}\int_{\mathcal{Y}_{2,2}}u_{2}\left(
x,t,y^{2},s^{2}\right) v_{1}\left( x,y_{1}\right) v_{2}\left( y_{2}\right)
c\left( t,s^{2}\right) \emph{d}y^{2}\emph{d}s^{2}\emph{d}x\emph{d}t\text{,}
\end{gather*}%
which proves (\ref{vw2}).
\end{proof}

\section{Homogenization\label{Homogenization}}

This section is devoted to the homogenization of problem (\ref{problem}). We
start by recalling the equation%
\begin{eqnarray}
\varepsilon ^{p}\partial _{t}u_{\varepsilon }\left( x,t\right) -\nabla \cdot
\left( a\left( \frac{x}{\varepsilon },\frac{x}{\varepsilon ^{2}},\frac{t}{%
\varepsilon ^{q}},\frac{t}{\varepsilon ^{r}}\right) \nabla u_{\varepsilon
}\left( x,t\right) \right) &=&f\left( x,t\right) \text{ in }\Omega _{T}\text{%
,}  \notag \\
u_{\varepsilon }\left( x,0\right) &=&u_{0}\left( x\right) \text{ in }\Omega 
\text{,}  \label{problem2} \\
u_{\varepsilon }\left( x,t\right) &=&0\text{ on }\partial \Omega \times
\left( 0,T\right) \text{,}  \notag
\end{eqnarray}%
where $0<p<q<r$, $f\in L^{2}(\Omega _{T})$ and $u_{0}\in L^{2}(\Omega )$.
Under the assumption that the coefficient $a\in C_{\sharp }(\mathcal{Y}%
_{2,2})^{N\times N}$ satisfies the coercivity condition%
\begin{equation*}
a\left( y^{2},s^{2}\right) \xi \cdot \xi \geq C_{0}\left\vert \xi
\right\vert ^{2}
\end{equation*}%
for all $\left( y^{2},s^{2}\right) \in 
\mathbb{R}
^{2N}\times 
\mathbb{R}
^{2}$, all $\xi \in 
\mathbb{R}
^{N}$ and some $C_{0}>0$, (\ref{problem2}) possesses a unique solution $%
u_{\varepsilon }\in W^{1,2}(0,T;H_{0}^{1}(\Omega ),L^{2}(\Omega ))$ for
every fixed $\varepsilon $, see Section 23.7 in \cite{ZeiIIA}. Further, the
a priori estimate%
\begin{equation}
\left\Vert u_{\varepsilon }\right\Vert _{L^{2}(0,T;H_{0}^{1}(\Omega ))}\leq
C_{1}  \label{a priori}
\end{equation}%
holds for some $C_{1}>0$ independent of $\varepsilon $, according to the
reasoning in Section 3 in \cite{JoLoarxiv}.

Before we are ready to give the homogenization result we show that the
assumptions (\ref{villkor1}) and (\ref{villkor2}) in Theorems \ref{Theorem -
gradient charact} and \ref{Theorem - Compactresult vw} are satisfied, i.e.
that for $v\in \emph{D}(\Omega )$, $c_{1}\in \emph{D}(0,T)$,$\ c_{2}\in
C_{\sharp }^{\infty }(S_{1})$, $c_{3}\in C_{\sharp }^{\infty }(S_{2})$ and $%
r>q>0$%
\begin{equation}
\lim_{\varepsilon \rightarrow 0}\int_{\Omega _{T}}u_{\varepsilon }\left(
x,t\right) v\left( x\right) \partial _{t}\left( \varepsilon ^{r}c_{1}\left(
t\right) c_{2}\left( \frac{t}{\varepsilon ^{q}}\right) c_{3}\left( \frac{t}{%
\varepsilon ^{r}}\right) \right) \emph{d}x\emph{d}t=0  \label{villkor1-3}
\end{equation}%
and%
\begin{equation}
\lim_{\varepsilon \rightarrow 0}\int_{\Omega _{T}}u_{\varepsilon }\left(
x,t\right) v\left( x\right) \partial _{t}\left( \varepsilon ^{q}c_{1}\left(
t\right) c_{2}\left( \frac{t}{\varepsilon ^{q}}\right) \right) \emph{d}x%
\emph{d}t=0\text{.}  \label{villkor2-3}
\end{equation}%
The weak form of (\ref{problem2}) is%
\begin{gather}
\int_{\Omega _{T}}-\varepsilon ^{p}u_{\varepsilon }\left( x,t\right) v\left(
x\right) \partial _{t}c\left( t\right) +a\left( \frac{x}{\varepsilon },\frac{%
x}{\varepsilon ^{2}},\frac{t}{\varepsilon ^{q}},\frac{t}{\varepsilon ^{r}}%
\right) \nabla u_{\varepsilon }\left( x,t\right) \cdot \nabla v\left(
x\right) c\left( t\right) \emph{d}x\emph{d}t  \notag \\
=\int_{\Omega _{T}}f\left( x,t\right) v\left( x\right) c\left( t\right) 
\emph{d}x\emph{d}t\text{,}  \label{weak form}
\end{gather}%
where $0<p<q<r$, for all $v\in H_{0}^{1}(\Omega )$ and $c\in \emph{D}(0,T)$.
Taking the test function%
\begin{equation*}
v\left( x\right) c\left( t\right) =\varepsilon ^{r-p}v_{1}\left( x\right)
c_{1}\left( t\right) c_{2}\left( \frac{t}{\varepsilon ^{q}}\right)
c_{3}\left( \frac{t}{\varepsilon ^{r}}\right) \text{,}
\end{equation*}%
with $v_{1}\in \emph{D}(\Omega )$, $c_{1}\in \emph{D}(0,T)$,$\ c_{2}\in
C_{\sharp }^{\infty }(S_{1})$ and $c_{3}\in C_{\sharp }^{\infty }(S_{2})$,
we get, after rearranging,%
\begin{gather*}
\int_{\Omega _{T}}u_{\varepsilon }\left( x,t\right) v_{1}\left( x\right)
\partial _{t}\left( \varepsilon ^{r}c_{1}\left( t\right) c_{2}\left( \frac{t%
}{\varepsilon ^{q}}\right) c_{3}\left( \frac{t}{\varepsilon ^{r}}\right)
\right) \emph{d}x\emph{d}t \\
=\int_{\Omega _{T}}\varepsilon ^{r-p}a\left( \frac{x}{\varepsilon },\frac{x}{%
\varepsilon ^{2}},\frac{t}{\varepsilon ^{q}},\frac{t}{\varepsilon ^{r}}%
\right) \nabla u_{\varepsilon }\left( x,t\right) \cdot \nabla v_{1}\left(
x\right) c_{1}\left( t\right) c_{2}\left( \frac{t}{\varepsilon ^{q}}\right)
c_{3}\left( \frac{t}{\varepsilon ^{r}}\right) \emph{d}x\emph{d}t \\
-\int_{\Omega _{T}}\varepsilon ^{r-p}f\left( x,t\right) v_{1}\left( x\right)
c_{1}\left( t\right) c_{2}\left( \frac{t}{\varepsilon ^{q}}\right)
c_{3}\left( \frac{t}{\varepsilon ^{r}}\right) \emph{d}x\emph{d}t\text{.}
\end{gather*}%
Passing to the limit while recalling that $\left\{ u_{\varepsilon }\right\} $
is bounded in $L^{2}(0,T;H_{0}^{1}(\Omega ))$, which implies boundedness of $%
\left\{ \nabla u_{\varepsilon }\right\} $ in $L^{2}(\Omega _{T})^{N}$, we
obtain%
\begin{gather*}
\lim_{\varepsilon \rightarrow 0}\int_{\Omega _{T}}u_{\varepsilon }\left(
x,t\right) v_{1}\left( x\right) \partial _{t}\left( \varepsilon
^{r}c_{1}\left( t\right) c_{2}\left( \frac{t}{\varepsilon ^{q}}\right)
c_{3}\left( \frac{t}{\varepsilon ^{r}}\right) \right) \emph{d}x\emph{d}t \\
=\lim_{\varepsilon \rightarrow 0}\left( \int_{\Omega _{T}}\varepsilon
^{r-p}a\left( \frac{x}{\varepsilon },\frac{x}{\varepsilon ^{2}},\frac{t}{%
\varepsilon ^{q}},\frac{t}{\varepsilon ^{r}}\right) \nabla u_{\varepsilon
}\left( x,t\right) \right. \\
\cdot \nabla v_{1}\left( x\right) c_{1}\left( t\right) c_{2}\left( \frac{t}{%
\varepsilon ^{q}}\right) c_{3}\left( \frac{t}{\varepsilon ^{r}}\right) \emph{%
d}x\emph{d}t \\
-\left. \int_{\Omega _{T}}\varepsilon ^{r-p}f\left( x,t\right) v_{1}\left(
x\right) c_{1}\left( t\right) c_{2}\left( \frac{t}{\varepsilon ^{q}}\right)
c_{3}\left( \frac{t}{\varepsilon ^{r}}\right) \emph{d}x\emph{d}t\right) =0
\end{gather*}%
and (\ref{villkor1-3}) is fulfilled. Following the same steps again but
taking the test function%
\begin{equation*}
v\left( x\right) c\left( t\right) =\varepsilon ^{q-p}v_{1}\left( x\right)
c_{1}\left( t\right) c_{2}\left( \frac{t}{\varepsilon ^{q}}\right) \text{,}
\end{equation*}%
where $v_{1}\in \emph{D}(\Omega )$, $c_{1}\in \emph{D}(0,T)$ and$\ c_{2}\in
C_{\sharp }^{\infty }(S_{1})$, in the weak form (\ref{weak form}) yields
that (\ref{villkor2-3}) is fulfilled.

We are now prepared to prove the homogenization result. Depending on the
choices of $p$, $q$ and $r$ $(0<p<q<r)$ in (\ref{problem2}), we get
different outcomes. In Theorem \ref{Theorem - homogenisering} we present the
13 possible cases, arising from different combinations of $p$, $q$ and $r$.
Here we will see that the local problems are parabolic when the matching
between the microscopic scales that give resonance is shifted by $p$
compared to the standard case (cf. Section 5.3.1 in \cite{PerPhD}). This
means that resonance appears when\ the temporal scale multiplied by $%
\varepsilon ^{-p}$ is the square of a spatial scale.

\begin{theorem}
\label{Theorem - homogenisering} Let $\left\{ u_{\varepsilon }\right\} $ be
a sequence of solutions to (\ref{problem2}) in $W^{1,2}(0,T;H_{0}^{1}(\Omega
),L^{2}(\Omega ))$. Then it holds that%
\begin{equation}
u_{\varepsilon }\left( x,t\right) \rightharpoonup u\left( x,t\right) \text{
in }L^{2}(0,T;H_{0}^{1}(\Omega ))  \label{svag homogenisering}
\end{equation}%
\begin{equation}
u_{\varepsilon }\left( x,t\right) \overset{3,3}{\rightharpoonup }u\left(
x,t\right)  \label{3,3 homogenisering}
\end{equation}%
and%
\begin{equation}
\nabla u_{\varepsilon }\left( x,t\right) \overset{3,3}{\rightharpoonup }%
\nabla u\left( x,t\right) +\nabla _{y_{1}}u_{1}\left( x,t,y_{1},s^{2}\right)
+\nabla _{y_{2}}u_{2}\left( x,t,y^{2},s^{2}\right) \text{,}
\label{gradsplit homogenisering}
\end{equation}%
where $u\in L^{2}(0,T;H_{0}^{1}(\Omega ))$ is the unique solution to the
homogenized problem%
\begin{eqnarray}
-\nabla \cdot \left( b\nabla u\left( x,t\right) \right) &=&f\left(
x,t\right) \text{ in }\Omega _{T}\text{,}  \label{homogeniserat problem} \\
u\left( x,t\right) &=&0\text{ on }\partial \Omega \times \left( 0,T\right) 
\text{,}  \notag
\end{eqnarray}%
where the coefficient $b$ is characterized by the formulas below. For all 13
cases we assume that $0<p<q<r$.

\begin{enumerate}
\item Letting $r<2+p$, the homogenized coefficient is given by%
\begin{gather}
b\nabla u\left( x,t\right) =\int_{\mathcal{Y}_{2,2}}a\left(
y^{2},s^{2}\right) \left( \nabla u\left( x,t\right) +\nabla
_{y_{1}}u_{1}\left( x,t,y_{1},s^{2}\right) \right.
\label{hom.coeff. inget oberoende} \\
+\left. \nabla _{y_{2}}u_{2}\left( x,t,y^{2},s^{2}\right) \right)
dy^{2}ds^{2}\text{,}  \notag
\end{gather}%
and $u_{1}\in L^{2}(\Omega _{T}\times S^{2};H_{\sharp }^{1}(Y_{1})/%
\mathbb{R}
)$ and $u_{2}\in L^{2}(\Omega _{T}\times \mathcal{Y}_{1,2};H_{\sharp
}^{1}(Y_{2})/%
\mathbb{R}
)$ are given by the local problems%
\begin{gather}
-\nabla _{y_{2}}\cdot \left( a\left( y^{2},s^{2}\right) \left( \nabla
u\left( x,t\right) +\nabla _{y_{1}}u_{1}\left( x,t,y_{1},s^{2}\right)
\right. \right.  \label{first local case 1} \\
+\left. \left. \nabla _{y_{2}}u_{2}\left( x,t,y^{2},s^{2}\right) \right)
\right) =0  \notag
\end{gather}%
and%
\begin{gather}
-\nabla _{y_{1}}\cdot \int_{Y_{2}}a\left( y^{2},s^{2}\right) \left( \nabla
u\left( x,t\right) +\nabla _{y_{1}}u_{1}\left( x,t,y_{1},s^{2}\right) \right.
\label{second local case 1} \\
+\left. \nabla _{y_{2}}u_{2}\left( x,t,y^{2},s^{2}\right) \right) dy_{2}=0%
\text{.}  \notag
\end{gather}

\item Choosing $r=2+p$, the coefficient $b$ is determined by (\ref%
{hom.coeff. inget oberoende}) while $u_{1}\in L^{2}(\Omega _{T}\times S_{1};%
\mathcal{W}_{1,2})$ and $u_{2}\in L^{2}(\Omega _{T}\times \mathcal{Y}%
_{1,2};H_{\sharp }^{1}(Y_{2})/%
\mathbb{R}
)$ are the solutions to the local problems%
\begin{gather}
-\nabla _{y_{2}}\cdot \left( a\left( y^{2},s^{2}\right) \left( \nabla
u\left( x,t\right) +\nabla _{y_{1}}u_{1}\left( x,t,y_{1},s^{2}\right)
\right. \right.  \label{first local case 2} \\
+\left. \left. \nabla _{y_{2}}u_{2}\left( x,t,y^{2},s^{2}\right) \right)
\right) =0  \notag
\end{gather}%
and%
\begin{gather}
\partial _{s_{2}}u_{1}\left( x,t,y_{1},s^{2}\right) -\nabla _{y_{1}}\cdot
\int_{Y_{2}}a\left( y^{2},s^{2}\right) \left( \nabla u\left( x,t\right)
+\nabla _{y_{1}}u_{1}\left( x,t,y_{1},s^{2}\right) \right.
\label{second local case 2} \\
+\left. \nabla _{y_{2}}u_{2}\left( x,t,y^{2},s^{2}\right) \right) dy_{2}=0%
\text{.}  \notag
\end{gather}

\item If $2+p<r<4+p$ while $q<2+p$, we have%
\begin{gather}
b\nabla u\left( x,t\right) =\int_{\mathcal{Y}_{2,2}}a\left(
y^{2},s^{2}\right) \left( \nabla u\left( x,t\right) +\nabla
_{y_{1}}u_{1}\left( x,t,y_{1},s_{1}\right) \right.
\label{hom.coeff. u1 ober s1} \\
+\left. \nabla _{y_{2}}u_{2}\left( x,t,y^{2},s^{2}\right) \right)
dy^{2}ds^{2}  \notag
\end{gather}%
where $u_{1}\in L^{2}(\Omega _{T}\times S_{1};H_{\sharp }^{1}(Y_{1})/%
\mathbb{R}
)$ and $u_{2}\in L^{2}(\Omega _{T}\times \mathcal{Y}_{1,2};H_{\sharp
}^{1}(Y_{2})/%
\mathbb{R}
)$ are given by the system%
\begin{gather}
-\nabla _{y_{2}}\cdot \left( a\left( y^{2},s^{2}\right) \left( \nabla
u\left( x,t\right) +\nabla _{y_{1}}u_{1}\left( x,t,y_{1},s_{1}\right)
\right. \right.  \label{first local case 3} \\
+\left. \left. \nabla _{y_{2}}u_{2}\left( x,t,y^{2},s^{2}\right) \right)
\right) =0  \notag
\end{gather}%
and%
\begin{gather}
\nabla _{y_{1}}\cdot \int_{Y_{2}\times S_{2}}a\left( y^{2},s^{2}\right)
\left( \nabla u\left( x,t\right) +\nabla _{y_{1}}u_{1}\left(
x,t,y_{1},s_{1}\right) \right.  \label{second local case 3} \\
+\left. \nabla _{y_{2}}u_{2}\left( x,t,y^{2},s^{2}\right) \right)
dy_{2}ds_{2}=0\text{.}  \notag
\end{gather}

\item Taking $r<4+p$ and $q=2+p$, the homogenized coefficient is given by (%
\ref{hom.coeff. u1 ober s1}) and $u_{1}\in L^{2}(\Omega _{T};\mathcal{W}%
_{1,1})$ and $u_{2}\in L^{2}(\Omega _{T}\times \mathcal{Y}_{1,2};H_{\sharp
}^{1}(Y_{2})/%
\mathbb{R}
)$ are determined by%
\begin{gather}
-\nabla _{y_{2}}\cdot \left( a\left( y^{2},s^{2}\right) \left( \nabla
u\left( x,t\right) +\nabla _{y_{1}}u_{1}\left( x,t,y_{1},s_{1}\right)
\right. \right.  \label{first local case 4} \\
+\left. \left. \nabla _{y_{2}}u_{2}\left( x,t,y^{2},s^{2}\right) \right)
\right) =0  \notag
\end{gather}%
and%
\begin{gather}
\partial _{s_{1}}u_{1}\left( x,t,y_{1},s_{1}\right) -\nabla _{y_{1}}\cdot
\int_{Y_{2}\times S_{2}}a\left( y^{2},s^{2}\right) \left( \nabla u\left(
x,t\right) +\nabla _{y_{1}}u_{1}\left( x,t,y_{1},s_{1}\right) \right.
\label{second local case 4} \\
+\left. \nabla _{y_{2}}u_{2}\left( x,t,y^{2},s^{2}\right) \right)
dy_{2}ds_{2}=0\text{.}  \notag
\end{gather}

\item When $r<4+p$ and $q>2+p$ the coefficient $b$ is determined by%
\begin{gather}
b\nabla u\left( x,t\right) =\int_{\mathcal{Y}_{2,2}}a\left(
y^{2},s^{2}\right) \left( \nabla u\left( x,t\right) +\nabla
_{y_{1}}u_{1}\left( x,t,y_{1}\right) \right.
\label{hom.coeff. u1 ober s1 och s2} \\
+\left. \nabla _{y_{2}}u_{2}\left( x,t,y^{2},s^{2}\right) \right)
dy^{2}ds^{2}  \notag
\end{gather}%
and the local problems are%
\begin{gather}
-\nabla _{y_{2}}\cdot \left( a\left( y^{2},s^{2}\right) \left( \nabla
u\left( x,t\right) +\nabla _{y_{1}}u_{1}\left( x,t,y_{1}\right) \right.
\right.  \label{first local case 5} \\
+\left. \nabla _{y_{2}}u_{2}\left( x,t,y^{2},s^{2}\right) \right) =0  \notag
\end{gather}%
and%
\begin{gather}
-\nabla _{y_{1}}\cdot \int_{Y_{2}\times S^{2}}a\left( y^{2},s^{2}\right)
\left( \nabla u\left( x,t\right) +\nabla _{y_{1}}u_{1}\left(
x,t,y_{1}\right) \right.  \label{second local case 5} \\
+\left. \nabla _{y_{2}}u_{2}\left( x,t,y^{2},s^{2}\right) \right)
dy_{2}ds^{2}=0\text{,}  \notag
\end{gather}%
where $u_{1}\in L^{2}(\Omega _{T};H_{\sharp }^{1}(Y_{1})/%
\mathbb{R}
)$ and $u_{2}\in L^{2}(\Omega _{T}\times \mathcal{Y}_{1,2};H_{\sharp
}^{1}(Y_{2})/%
\mathbb{R}
)$.

\item In the case when $r=4+p$ while $q<2+p$, the homogenized coefficient is
characterized by (\ref{hom.coeff. u1 ober s1}) while $u_{1}\in L^{2}(\Omega
_{T}\times S_{1};H_{\sharp }^{1}(Y_{1})/%
\mathbb{R}
)$ and $u_{2}\in L^{2}(\Omega _{T}\times \mathcal{Y}_{1,1};\mathcal{W}%
_{2,2}) $ are given by the system of local problems%
\begin{gather}
\partial _{s_{2}}u_{2}\left( x,t,y^{2},s^{2}\right) -\nabla _{y_{2}}\cdot
\left( a\left( y^{2},s^{2}\right) \left( \nabla u\left( x,t\right) +\nabla
_{y_{1}}u_{1}\left( x,t,y_{1},s_{1}\right) \right. \right.
\label{first local case 6} \\
+\left. \left. \nabla _{y_{2}}u_{2}\left( x,t,y^{2},s^{2}\right) \right)
\right) =0  \notag
\end{gather}%
and%
\begin{gather}
-\nabla _{y_{1}}\cdot \int_{Y_{2}\times S_{2}}a\left( y^{2},s^{2}\right)
\left( \nabla u\left( x,t\right) +\nabla _{y_{1}}u_{1}\left(
x,t,y_{1},s_{1}\right) \right.  \label{second local case 6} \\
+\left. \nabla _{y_{2}}u_{2}\left( x,t,y^{2},s^{2}\right) \right)
dy_{2}ds_{2}=0\text{.}  \notag
\end{gather}

\item When $r=4+p$ and $q=2+p$, the coefficient $b$ is given by (\ref%
{hom.coeff. u1 ober s1}) where $u_{1}\in L^{2}(\Omega _{T};\mathcal{W}%
_{1,1}) $ and $u_{2}\in L^{2}(\Omega _{T}\times \mathcal{Y}_{1,1};\mathcal{W}%
_{2,2})$ are the solutions to%
\begin{gather}
\partial _{s_{2}}u_{2}\left( x,t,y^{2},s^{2}\right) -\nabla _{y_{2}}\cdot
\left( a\left( y^{2},s^{2}\right) \left( \nabla u\left( x,t\right) +\nabla
_{y_{1}}u_{1}\left( x,t,y_{1},s_{1}\right) \right. \right.
\label{first local case 7} \\
+\left. \left. \nabla _{y_{2}}u_{2}\left( x,t,y^{2},s^{2}\right) \right)
\right) =0  \notag
\end{gather}%
and%
\begin{gather}
\partial _{s_{1}}u_{1}\left( x,t,y_{1},s_{1}\right) -\nabla _{y_{1}}\cdot
\int_{Y_{2}\times S_{2}}a\left( y^{2},s^{2}\right) \left( \nabla u\left(
x,t\right) +\nabla _{y_{1}}u_{1}\left( x,t,y_{1},s_{1}\right) \right.
\label{second local case 7} \\
+\left. \nabla _{y_{2}}u_{2}\left( x,t,y^{2},s^{2}\right) \right)
dy_{2}ds_{2}=0\text{.}  \notag
\end{gather}

\item Letting $r=4+p$ while $q>2+p$ gives us the homogenized coefficient (%
\ref{hom.coeff. u1 ober s1 och s2}) defined by the system of local problems%
\begin{gather}
\partial _{s_{2}}u_{2}\left( x,t,y^{2},s^{2}\right) -\nabla _{y_{2}}\cdot
\left( a\left( y^{2},s^{2}\right) \left( \nabla u\left( x,t\right) +\nabla
_{y_{1}}u_{1}\left( x,t,y_{1}\right) \right. \right.
\label{first local case 8} \\
+\left. \nabla _{y_{2}}u_{2}\left( x,t,y^{2},s^{2}\right) \right) =0  \notag
\end{gather}%
and%
\begin{gather}
-\nabla _{y_{1}}\cdot \int_{Y_{2}\times S^{2}}a\left( y^{2},s^{2}\right)
\left( \nabla u\left( x,t\right) +\nabla _{y_{1}}u_{1}\left(
x,t,y_{1}\right) \right.  \label{second local case 8} \\
+\left. \nabla _{y_{2}}u_{2}\left( x,t,y^{2},s^{2}\right) \right)
dy_{2}ds^{2}=0\text{,}  \notag
\end{gather}%
where $u_{1}\in L^{2}(\Omega _{T};H_{\sharp }^{1}(Y_{1})/%
\mathbb{R}
)$ and $u_{2}\in L^{2}(\Omega _{T}\times \mathcal{Y}_{1,1};\mathcal{W}%
_{2,2}) $.

\item Choosing $r>4+p$ and $q<2+p$, we have the homogenized coefficient%
\begin{gather}
b\nabla u\left( x,t\right) =\int_{\mathcal{Y}_{2,1}}\left(
\int_{S_{2}}a\left( y^{2},s^{2}\right) ds_{2}\right) \left( \nabla u\left(
x,t\right) +\nabla _{y_{1}}u_{1}\left( x,t,y_{1},s_{1}\right) \right.
\label{hom.coeff. u1 och u2 ober s2} \\
+\left. \nabla _{y_{2}}u_{2}\left( x,t,y^{2},s_{1}\right) \right)
dy^{2}ds_{1}  \notag
\end{gather}%
where $u_{1}\in L^{2}(\Omega _{T}\times S_{1};H_{\sharp }^{1}(Y_{1})/%
\mathbb{R}
)$ and $u_{2}\in L^{2}(\Omega _{T}\times \mathcal{Y}_{1,1};H_{\sharp
}^{1}(Y_{2})/%
\mathbb{R}
)$ are the solutions to the local problems%
\begin{gather}
-\nabla _{y_{2}}\cdot \left( \int_{S_{2}}a\left( y^{2},s^{2}\right)
ds_{2}\right) \left( \nabla u\left( x,t\right) +\nabla _{y_{1}}u_{1}\left(
x,t,y_{1},s_{1}\right) \right.  \label{first local case 9} \\
+\left. \nabla _{y_{2}}u_{2}\left( x,t,y^{2},s_{1}\right) \right) =0  \notag
\end{gather}%
and%
\begin{gather}
-\nabla _{y_{1}}\cdot \int_{Y_{2}}\left( \int_{S_{2}}a\left(
y^{2},s^{2}\right) ds_{2}\right) \left( \nabla u\left( x,t\right) +\nabla
_{y_{1}}u_{1}\left( x,t,y_{1},s_{1}\right) \right.
\label{second local case 9} \\
+\left. \nabla _{y_{2}}u_{2}\left( x,t,y^{2},s_{1}\right) \right) dy_{2}=0%
\text{.}  \notag
\end{gather}

\item When $r>4+p$ while $q=2+p$, the homogenized coefficient is given by (%
\ref{hom.coeff. u1 och u2 ober s2}) and the local problems are%
\begin{gather}
-\nabla _{y_{2}}\cdot \left( \int_{S_{2}}a\left( y^{2},s^{2}\right)
ds_{2}\right) \left( \nabla u\left( x,t\right) +\nabla _{y_{1}}u_{1}\left(
x,t,y_{1},s_{1}\right) \right.  \label{first local case 10} \\
+\left. \nabla _{y_{2}}u_{2}\left( x,t,y^{2},s_{1}\right) \right) =0  \notag
\end{gather}%
and%
\begin{gather}
\partial _{s_{1}}u_{1}\left( x,t,y_{1},s_{1}\right) -\nabla _{y_{1}}\cdot
\int_{Y_{2}}\left( \int_{S_{2}}a\left( y^{2},s^{2}\right) ds_{2}\right)
\left( \nabla u\left( x,t\right) \right.  \label{second local case 10} \\
+\left. \nabla _{y_{1}}u_{1}\left( x,t,y_{1},s_{1}\right) +\nabla
_{y_{2}}u_{2}\left( x,t,y^{2},s_{1}\right) \right) dy_{2}=0\text{,}  \notag
\end{gather}%
with $u_{1}\in L^{2}(\Omega _{T};\mathcal{W}_{1,1})$ and $u_{2}\in
L^{2}(\Omega _{T}\times \mathcal{Y}_{1,1};H_{\sharp }^{1}(Y_{2})/%
\mathbb{R}
)$.

\item When $r>4+p$ and $2+p<q<4+p$, we have%
\begin{gather}
b\nabla u\left( x,t\right) =\int_{\mathcal{Y}_{2,1}}\left(
\int_{S_{2}}a\left( y^{2},s^{2}\right) ds_{2}\right) \left( \nabla u\left(
x,t\right) +\nabla _{y_{1}}u_{1}\left( x,t,y_{1}\right) \right.
\label{hom.coeff. u1 ober s1 och s2 u2 ober s2} \\
+\left. \nabla _{y_{2}}u_{2}\left( x,t,y^{2},s_{1}\right) \right)
dy^{2}ds_{1}  \notag
\end{gather}%
together with the local problems%
\begin{gather}
-\nabla _{y_{2}}\cdot \left( \int_{S_{2}}a\left( y^{2},s^{2}\right)
ds_{2}\right) \left( \nabla u\left( x,t\right) +\nabla _{y_{1}}u_{1}\left(
x,t,y_{1}\right) \right.  \label{first local case 11} \\
+\left. \nabla _{y_{2}}u_{2}\left( x,t,y^{2},s_{1}\right) \right) =0  \notag
\end{gather}%
and%
\begin{gather}
-\nabla _{y_{1}}\cdot \int_{Y_{2}\times S_{1}}\left( \int_{S_{2}}a\left(
y^{2},s^{2}\right) ds_{2}\right) \left( \nabla u\left( x,t\right) +\nabla
_{y_{1}}u_{1}\left( x,t,y_{1}\right) \right.  \label{second local case 11} \\
+\left. \nabla _{y_{2}}u_{2}\left( x,t,y^{2},s_{1}\right) \right)
dy_{2}ds_{1}=0\text{,}  \notag
\end{gather}%
where $u_{1}\in L^{2}(\Omega _{T};H_{\sharp }^{1}(Y_{1})/%
\mathbb{R}
)$ and $u_{2}\in L^{2}(\Omega _{T}\times \mathcal{Y}_{1,1};H_{\sharp
}^{1}(Y_{2})/%
\mathbb{R}
)$.

\item Taking $q=4+p$, the coefficient in the homogenized problem is given by
(\ref{hom.coeff. u1 ober s1 och s2 u2 ober s2}) and $u_{1}\in L^{2}(\Omega
_{T};H_{\sharp }^{1}(Y_{1})/%
\mathbb{R}
)$ and $u_{2}\in L^{2}(\Omega _{T}\times Y_{1};\mathcal{W}_{2,1})$ are
determined by%
\begin{gather}
\partial _{s_{1}}u_{2}\left( x,t,y^{2},s_{1}\right) -\nabla _{y_{2}}\cdot
\left( \int_{S_{2}}a\left( y^{2},s^{2}\right) ds_{2}\right) \left( \nabla
u\left( x,t\right) +\nabla _{y_{1}}u_{1}\left( x,t,y_{1}\right) \right.
\label{first local case 12} \\
+\left. \nabla _{y_{2}}u_{2}\left( x,t,y^{2},s_{1}\right) \right) =0  \notag
\end{gather}%
and%
\begin{gather}
-\nabla _{y_{1}}\cdot \int_{Y_{2}\times S_{1}}\left( \int_{S_{2}}a\left(
y^{2},s^{2}\right) ds_{2}\right) \left( \nabla u\left( x,t\right) +\nabla
_{y_{1}}u_{1}\left( x,t,y_{1}\right) \right.  \label{second local case 12} \\
+\left. \nabla _{y_{2}}u_{2}\left( x,t,y^{2},s_{1}\right) \right)
dy_{2}ds_{1}=0\text{.}  \notag
\end{gather}

\item In the case when $q>4+p$, the coefficient is characterized by%
\begin{gather*}
b\nabla u\left( x,t\right) =\int_{Y^{2}}\left( \int_{S^{2}}a\left(
y^{2},s^{2}\right) ds^{2}\right) \left( \nabla u\left( x,t\right) +\nabla
_{y_{1}}u_{1}\left( x,t,y_{1}\right) \right. \\
+\left. \nabla _{y_{2}}u_{2}\left( x,t,y^{2}\right) \right) dy^{2}
\end{gather*}%
and the local problems are given by%
\begin{gather}
-\nabla _{y_{2}}\cdot \left( \int_{S^{2}}a\left( y^{2},s^{2}\right)
ds^{2}\right) \left( \nabla u\left( x,t\right) +\nabla _{y_{1}}u_{1}\left(
x,t,y_{1}\right) \right.  \label{first local case 13} \\
+\left. \nabla _{y_{2}}u_{2}\left( x,t,y^{2}\right) \right) =0  \notag
\end{gather}%
and%
\begin{gather}
-\nabla _{y_{1}}\cdot \int_{Y_{2}}\left( \int_{S^{2}}a\left(
y^{2},s^{2}\right) ds^{2}\right) \left( \nabla u\left( x,t\right) +\nabla
_{y_{1}}u_{1}\left( x,t,y_{1}\right) \right.  \label{second local case 13} \\
+\left. \nabla _{y_{2}}u_{2}\left( x,t,y^{2}\right) \right) dy_{2}=0\text{,}
\notag
\end{gather}%
where $u_{1}\in L^{2}(\Omega _{T};H_{\sharp }^{1}(Y_{1})/%
\mathbb{R}
)$ and $u_{2}\in L^{2}(\Omega _{T}\times Y_{1};H_{\sharp }^{1}(Y_{2})/%
\mathbb{R}
)$.
\end{enumerate}
\end{theorem}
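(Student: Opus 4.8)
The plan is to deduce the three convergence statements directly from the earlier results and then, case by case, to read off the homogenized equation and the two local problems. The weak convergence (\ref{svag homogenisering}), the $(3,3)$-convergence (\ref{3,3 homogenisering}) and the gradient split (\ref{gradsplit homogenisering}) are immediate from Theorem \ref{Theorem - gradient charact}: the a priori bound (\ref{a priori}) supplies boundedness in $L^{2}(0,T;H_{0}^{1}(\Omega))$, and the hypotheses (\ref{villkor1}), (\ref{villkor2}) were verified in (\ref{villkor1-3}) and (\ref{villkor2-3}). The organizing remark is that the thirteen cases are exactly the possible positions of the pair $(q,r)$, under $0<p<q<r$, relative to the two thresholds $2+p$ and $4+p$. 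Here $2+p$ is the shifted resonance level matching the spatial scale $\varepsilon$ (variable $y_{1}$) against a temporal scale, and $4+p$ is the shifted level matching the finer scale $\varepsilon^{2}$ (variable $y_{2}$). Below a threshold the corresponding temporal variable survives as a parameter and the relevant cell problem is elliptic; at a threshold resonance occurs and the cell problem is parabolic in that variable; above a threshold the variable is averaged out, so that $a$ is replaced by its mean over it.

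First I would extract the homogenized equation by testing the weak form (\ref{weak form}) with $v(x)c_{1}(t)$, where $v\in D(\Omega)$ and $c_{1}\in D(0,T)$. The term $-\varepsilon^{p}\int_{\Omega_{T}}u_{\varepsilon}v\,\partial_{t}c_{1}$ tends to zero since $\varepsilon^{p}\to0$ and $\{u_{\varepsilon}\}$ is bounded; this is precisely the mechanism making the limit problem elliptic. Writing the diffusion term as $\nabla u_{\varepsilon}\cdot(a^{T}\nabla v\,c_{1})$ and passing to the limit with the $(3,3)$-convergence (\ref{gradsplit homogenisering}) against the admissible test function $a^{T}\nabla v\,c_{1}$ gives
\begin{equation*}
\int_{\Omega_{T}}\int_{\mathcal{Y}_{2,2}}a(\nabla u+\nabla_{y_{1}}u_{1}+\nabla_{y_{2}}u_{2})\cdot\nabla v\,c_{1}\,dy^{2}ds^{2}dxdt=\int_{\Omega_{T}}fv\,c_{1}\,dxdt,
\end{equation*}
the weak form of $-\nabla\cdot(b\nabla u)=f$ with $b\nabla u=\int_{\mathcal{Y}_{2,2}}a(\nabla u+\nabla_{y_{1}}u_{1}+\nabla_{y_{2}}u_{2})\,dy^{2}ds^{2}$. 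When $u_{1}$ or $u_{2}$ turns out independent of $s_{1}$ or $s_{2}$, the temporal integrations fall on $a$ alone, producing the averaged forms of $b$ in the statement.

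The heart of the proof is the two local problems, via a scaling computation combined with the very weak convergences of Theorem \ref{Theorem - Compactresult vw}. For the $y_{2}$-problem I would test (\ref{weak form}) with $\phi_{\varepsilon}=\varepsilon^{\alpha}v_{1}(x)v_{2}(x/\varepsilon)w(x/\varepsilon^{2})c_{1}(t)c_{2}(t/\varepsilon^{q})c_{3}(t/\varepsilon^{r})$, $w\in C_{\sharp}^{\infty}(Y_{2})/\mathbb{R}$. Then the diffusion term is of order $\varepsilon^{\alpha-2}$, while the dominant time contributions are of orders $\varepsilon^{p+\alpha+2-q}$ and $\varepsilon^{p+\alpha+2-r}$ applied to $\varepsilon^{-2}u_{\varepsilon}$ and paired with $\partial_{s_{1}}c_{2}$ and $\partial_{s_{2}}c_{3}$. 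Taking $\alpha=2$ makes the diffusion order one and, after integrating by parts in $y_{2}$, produces $-\nabla_{y_{2}}\cdot(a(\cdots))$; the time term then survives exactly at resonance $q=4+p$ or $r=4+p$, where $\varepsilon^{-2}u_{\varepsilon}\underset{vw}{\overset{3,3}{\rightharpoonup}}u_{2}$ yields $\partial_{s_{1}}u_{2}$ or $\partial_{s_{2}}u_{2}$ after a further integration by parts in the temporal variable, as in (\ref{first local case 6}). Above a threshold the formal time term diverges; this apparent obstacle is removed by first proving independence: with $\alpha=r-p-2$ (respectively $q-p-2$) the $\partial_{s_{2}}$ (respectively $\partial_{s_{1}}$) term is order one while the diffusion term is of order $\varepsilon^{r-p-4}\to0$ (respectively $\varepsilon^{q-p-4}\to0$), forcing $\int_{S_{2}}u_{2}\partial_{s_{2}}c_{3}\,ds_{2}=0$, that is $u_{2}$ is independent of $s_{2}$ (respectively $s_{1}$). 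One then repeats the $\alpha=2$ computation with the corresponding $c_{3}$ (respectively $c_{2}$) constant, and the temporal integration averages $a$ to $\int_{S_{2}}a\,ds_{2}$, giving the averaged local problems such as (\ref{first local case 9}). The $y_{1}$-problem is treated identically with $\psi_{\varepsilon}=\varepsilon^{\beta}v_{1}(x)w(x/\varepsilon)c_{1}c_{2}c_{3}$, $w\in C_{\sharp}^{\infty}(Y_{1})/\mathbb{R}$, the thresholds now being $2+p$ and $\varepsilon^{-1}u_{\varepsilon}\underset{vw}{\overset{2,3}{\rightharpoonup}}u_{1}$; since the leading test gradient $\nabla_{y_{1}}w$ is independent of $y_{2}$, the $y_{2}$-integration falls on the whole integrand and produces the $\int_{Y_{2}}a(\cdots)\,dy_{2}$ of the second local problem, elliptic as in (\ref{second local case 1}), parabolic at resonance as in (\ref{second local case 2}), or averaged as in (\ref{second local case 3}). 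Running this bookkeeping through all positions of $(q,r)$ relative to $\{2+p,4+p\}$ reproduces precisely the thirteen listed cases.

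Finally I would record well-posedness. For fixed $\nabla u$ the coupled cell system, elliptic or parabolic in the spaces $\mathcal{W}_{i,j}$ with $s_{j}$-periodicity, has a unique solution $(u_{1},u_{2})$, so $b$ is a well-defined constant matrix inheriting the coercivity of $a$; hence the elliptic homogenized problem (\ref{homogeniserat problem}), with $t$ entering only as a parameter, has a unique $u\in L^{2}(0,T;H_{0}^{1}(\Omega))$. Since the limit is thereby uniquely determined, every extracted subsequence shares it and the whole sequence converges. The main obstacle is exactly the above-threshold regimes, where the formal time term diverges: the resolution is the staged argument, first establishing independence of the fast temporal variable by a dedicated exponent $\alpha$ (or $\beta$) and only then deriving the averaged local problem. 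The remaining work is the essentially mechanical verification that each of the thirteen parameter regions yields the stated coefficient and local problems.
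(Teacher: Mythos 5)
Your proposal is correct and follows essentially the same route as the paper's proof: the same application of Theorems \ref{Theorem - gradient charact} and \ref{Theorem - Compactresult vw}, the same scaled test functions (your $\alpha$, $\beta$ are the paper's $k$, with the identical choices $r-p-2$, $q-p-2$, $r-p-1$, $q-p-1$ for the independencies and $k=2$, $k=1$ for the first and second local problems), and the same staging in which independence of a fast temporal variable is proved first and the averaged local problem is then obtained by taking the corresponding $c_{2}$ or $c_{3}$ constant. The only points worth noting are that your closing well-posedness and whole-sequence remark is left implicit in the paper, and that the $s_{1}$-independence of $u_{2}$ (when $q>4+p$) requires setting $c_{3}\equiv 1$ \emph{before} passing to the limit, using the already-established $s_{2}$-independence, since otherwise the $\partial_{s_{2}}c_{3}$ term diverges; your staged argument accommodates this.
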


\begin{proof}
Since $\left\{ u_{\varepsilon }\right\} $ satisfies the a priori estimate (%
\ref{a priori}) and the conditions (\ref{villkor1-3}) and (\ref{villkor2-3}%
), Theorem \ref{Theorem - gradient charact} gives us (\ref{svag
homogenisering}), (\ref{3,3 homogenisering}) and (\ref{gradsplit
homogenisering}). The continuation of this proof will be divided into three
parts. We start by finding the homogenized problem (\ref{homogeniserat
problem}) followed by proving independencies of local time variables and
determining the local problems, which together will provide us with the
characterizations of the homogenized coefficient for all 13 cases.

Taking the test function%
\begin{equation*}
v\left( x\right) c\left( t\right) =v_{1}\left( x\right) c_{1}\left( t\right) 
\text{,}
\end{equation*}%
where $v_{1}\in H_{0}^{1}(\Omega )$ and $c_{1}\in \emph{D}(0,T)$, in the
weak form (\ref{weak form}) and letting $\varepsilon $ tend to zero, Theorem %
\ref{Theorem - gradient charact} yields%
\begin{gather*}
\int_{\Omega _{T}}\int_{\mathcal{Y}_{2,2}}a\left( y^{2},s^{2}\right) \left(
\nabla u\left( x,t\right) +\nabla _{y_{1}}u_{1}\left( x,t,y_{1},s^{2}\right)
+\nabla _{y_{2}}u_{2}\left( x,t,y^{2},s^{2}\right) \right) \\
\cdot \nabla v_{1}\left( x\right) c_{1}\left( t\right) \emph{d}y^{2}\emph{d}%
s^{2}\emph{d}x\emph{d}t=\int_{\Omega _{T}}f\left( x,t\right) v_{1}\left(
x\right) c_{1}\left( t\right) \emph{d}x\emph{d}t\text{.}
\end{gather*}%
By the Variational Lemma we arrive at

\begin{gather}
\int_{\Omega }\left( \int_{\mathcal{Y}_{2,2}}a\left( y^{2},s^{2}\right)
\left( u\left( x,t\right) +\nabla _{y_{1}}u_{1}\left( x,t,y_{1},s^{2}\right)
+\nabla _{y_{2}}u_{2}\left( x,t,y^{2},s^{2}\right) \right) \emph{d}y^{2}%
\emph{d}s^{2}\right)  \label{homogeniserade bevis} \\
\cdot \nabla v_{1}\left( x\right) \emph{d}x=\int_{\Omega }f\left( x,t\right)
v_{1}\left( x\right) \emph{d}x  \notag
\end{gather}%
a.e. in $(0,T)$, which is the weak form of (\ref{homogeniserat problem}).

We start by deriving a common ground, divided into two paths, for the
reasoning about independencies and the local problems. For the first path,
in the weak form (\ref{weak form}), we choose a test function which captures
the oscillations from the second microscopic scale $\varepsilon
_{2}=\varepsilon ^{2}$, more precise we choose%
\begin{equation}
v\left( x\right) c\left( t\right) =\varepsilon ^{k}v_{1}\left( x\right)
v_{2}\left( \frac{x}{\varepsilon }\right) v_{3}\left( \frac{x}{\varepsilon
^{2}}\right) c_{1}\left( t\right) c_{2}\left( \frac{t}{\varepsilon ^{q}}%
\right) c_{3}\left( \frac{t}{\varepsilon ^{r}}\right) \text{,}
\label{test functions 1}
\end{equation}%
where $k>0$, $v_{1}\in \emph{D}(\Omega )$, $v_{2}\in C_{\sharp }^{\infty
}(Y_{1})$, $v_{3}\in C_{\sharp }^{\infty }(Y_{2})/%
\mathbb{R}
$, $c_{1}\in \emph{D}(0,T)$,$\ c_{2}\in C_{\sharp }^{\infty }(S_{1})$ and $%
c_{3}\in C_{\sharp }^{\infty }(S_{2})$. After differentiations we arrive at%
\begin{gather*}
\int_{\Omega _{T}}-u_{\varepsilon }\left( x,t\right) v_{1}\left( x\right)
v_{2}\left( \frac{x}{\varepsilon }\right) v_{3}\left( \frac{x}{\varepsilon
^{2}}\right) \left( \varepsilon ^{k+p}\partial _{t}c_{1}\left( t\right)
c_{2}\left( \frac{t}{\varepsilon ^{q}}\right) c_{3}\left( \frac{t}{%
\varepsilon ^{r}}\right) \right. \\
+\left. \varepsilon ^{k+p-q}c_{1}\left( t\right) \partial
_{s_{1}}c_{2}\left( \frac{t}{\varepsilon ^{q}}\right) c_{3}\left( \frac{t}{%
\varepsilon ^{r}}\right) +\varepsilon ^{k+p-r}c_{1}\left( t\right)
c_{2}\left( \frac{t}{\varepsilon ^{q}}\right) \partial _{s_{2}}c_{3}\left( 
\frac{t}{\varepsilon ^{r}}\right) \right) \\
+a\left( \frac{x}{\varepsilon },\frac{x}{\varepsilon ^{2}},\frac{t}{%
\varepsilon ^{q}},\frac{t}{\varepsilon ^{r}}\right) \nabla u_{\varepsilon
}\left( x,t\right) \cdot \left( \varepsilon ^{k}\nabla v_{1}\left( x\right)
v_{2}\left( \frac{x}{\varepsilon }\right) v_{3}\left( \frac{x}{\varepsilon
^{2}}\right) \right. \\
+\left. \varepsilon ^{k-1}v_{1}\left( x\right) \nabla _{y_{1}}v_{2}\left( 
\frac{x}{\varepsilon }\right) v_{3}\left( \frac{x}{\varepsilon ^{2}}\right)
+\varepsilon ^{k-2}v_{1}\left( x\right) v_{2}\left( \frac{x}{\varepsilon }%
\right) \nabla _{y_{2}}v_{3}\left( \frac{x}{\varepsilon ^{2}}\right) \right)
\\
\times c_{1}\left( t\right) c_{2}\left( \frac{t}{\varepsilon ^{q}}\right)
c_{3}\left( \frac{t}{\varepsilon ^{r}}\right) \emph{d}x\emph{d}t \\
=\int_{\Omega _{T}}f\left( x,t\right) \varepsilon ^{k}v_{1}\left( x\right)
v_{2}\left( \frac{x}{\varepsilon }\right) v_{3}\left( \frac{x}{\varepsilon
^{2}}\right) c_{1}\left( t\right) c_{2}\left( \frac{t}{\varepsilon ^{q}}%
\right) c_{3}\left( \frac{t}{\varepsilon ^{r}}\right) \emph{d}x\emph{d}t%
\text{.}
\end{gather*}%
Passing to the limit, omitting terms that obviously tend to zero, we have%
\begin{gather}
\lim_{\varepsilon \rightarrow 0}\left( \int_{\Omega _{T}}-u_{\varepsilon
}\left( x,t\right) v_{1}\left( x\right) v_{2}\left( \frac{x}{\varepsilon }%
\right) v_{3}\left( \frac{x}{\varepsilon ^{2}}\right) \left( \varepsilon
^{k+p-q}c_{1}\left( t\right) \partial _{s_{1}}c_{2}\left( \frac{t}{%
\varepsilon ^{q}}\right) c_{3}\left( \frac{t}{\varepsilon ^{r}}\right)
\right. \right.  \notag \\
+\left. \varepsilon ^{k+p-r}c_{1}\left( t\right) c_{2}\left( \frac{t}{%
\varepsilon ^{q}}\right) \partial _{s_{2}}c_{3}\left( \frac{t}{\varepsilon
^{r}}\right) \right)  \label{point of departue 1} \\
+a\left( \frac{x}{\varepsilon },\frac{x}{\varepsilon ^{2}},\frac{t}{%
\varepsilon ^{q}},\frac{t}{\varepsilon ^{r}}\right) \nabla u_{\varepsilon
}\left( x,t\right) \cdot \left( \varepsilon ^{k-1}v_{1}\left( x\right)
\nabla _{y_{1}}v_{2}\left( \frac{x}{\varepsilon }\right) v_{3}\left( \frac{x%
}{\varepsilon ^{2}}\right) \right.  \notag \\
+\left. \left. \varepsilon ^{k-2}v_{1}\left( x\right) v_{2}\left( \frac{x}{%
\varepsilon }\right) \nabla _{y_{2}}v_{3}\left( \frac{x}{\varepsilon ^{2}}%
\right) \right) c_{1}\left( t\right) c_{2}\left( \frac{t}{\varepsilon ^{q}}%
\right) c_{3}\left( \frac{t}{\varepsilon ^{r}}\right) \emph{d}x\emph{d}%
t\right) =0\text{.}  \notag
\end{gather}%
For the second path, i.e. the one with respect to the first spatial
microscopic scale $\varepsilon _{1}=\varepsilon $, we let%
\begin{equation}
v\left( x\right) c\left( t\right) =\varepsilon ^{k}v_{1}\left( x\right)
v_{2}\left( \frac{x}{\varepsilon }\right) c_{1}\left( t\right) c_{2}\left( 
\frac{t}{\varepsilon ^{q}}\right) c_{3}\left( \frac{t}{\varepsilon ^{r}}%
\right) \text{,}  \label{test functions 2}
\end{equation}%
where $k>0$, $v_{1}\in \emph{D}(\Omega )$, $v_{2}\in C_{\sharp }^{\infty
}(Y_{1})/%
\mathbb{R}
$, $c_{1}\in \emph{D}(0,T)$,$\ c_{2}\in C_{\sharp }^{\infty }(S_{1})$ and $%
c_{3}\in C_{\sharp }^{\infty }(S_{2})$, act as a test function in the weak
form (\ref{weak form}). Differentiating leads to%
\begin{gather*}
\int_{\Omega _{T}}-u_{\varepsilon }\left( x,t\right) v_{1}\left( x\right)
v_{2}\left( \frac{x}{\varepsilon }\right) \left( \varepsilon ^{k+p}\partial
_{t}c_{1}\left( t\right) c_{2}\left( \frac{t}{\varepsilon ^{q}}\right)
c_{3}\left( \frac{t}{\varepsilon ^{r}}\right) \right. \\
+\left. \varepsilon ^{k+p-q}c_{1}\left( t\right) \partial
_{s_{1}}c_{2}\left( \frac{t}{\varepsilon ^{q}}\right) c_{3}\left( \frac{t}{%
\varepsilon ^{r}}\right) +\varepsilon ^{k+p-r}c_{1}\left( t\right)
c_{2}\left( \frac{t}{\varepsilon ^{q}}\right) \partial _{s_{2}}c_{3}\left( 
\frac{t}{\varepsilon ^{r}}\right) \right) \\
+a\left( \frac{x}{\varepsilon },\frac{x}{\varepsilon ^{2}},\frac{t}{%
\varepsilon ^{q}},\frac{t}{\varepsilon ^{r}}\right) \nabla u_{\varepsilon
}\left( x,t\right) \cdot \left( \varepsilon ^{k}\nabla v_{1}\left( x\right)
v_{2}\left( \frac{x}{\varepsilon }\right) +\varepsilon ^{k-1}v_{1}\left(
x\right) \nabla _{y_{1}}v_{2}\left( \frac{x}{\varepsilon }\right) \right) \\
\times c_{1}\left( t\right) c_{2}\left( \frac{t}{\varepsilon ^{q}}\right)
c_{3}\left( \frac{t}{\varepsilon ^{r}}\right) \emph{d}x\emph{d}t \\
=\int_{\Omega _{T}}f\left( x,t\right) \varepsilon ^{k}v_{1}\left( x\right)
v_{2}\left( \frac{x}{\varepsilon }\right) c_{1}\left( t\right) c_{2}\left( 
\frac{t}{\varepsilon ^{q}}\right) c_{3}\left( \frac{t}{\varepsilon ^{r}}%
\right) \emph{d}x\emph{d}t
\end{gather*}%
and as $\varepsilon \rightarrow 0$, after omitting terms that vanish, we have%
\begin{gather}
\lim_{\varepsilon \rightarrow 0}\left( \int_{\Omega _{T}}-u_{\varepsilon
}\left( x,t\right) v_{1}\left( x\right) v_{2}\left( \frac{x}{\varepsilon }%
\right) \left( \varepsilon ^{k+p-q}c_{1}\left( t\right) \partial
_{s_{1}}c_{2}\left( \frac{t}{\varepsilon ^{q}}\right) c_{3}\left( \frac{t}{%
\varepsilon ^{r}}\right) \right. \right.  \notag \\
+\left. \varepsilon ^{k+p-r}c_{1}\left( t\right) c_{2}\left( \frac{t}{%
\varepsilon ^{q}}\right) \partial _{s_{2}}c_{3}\left( \frac{t}{\varepsilon
^{r}}\right) \right)  \label{point of departue 2} \\
+a\left( \frac{x}{\varepsilon },\frac{x}{\varepsilon ^{2}},\frac{t}{%
\varepsilon ^{q}},\frac{t}{\varepsilon ^{r}}\right) \nabla u_{\varepsilon
}\left( x,t\right) \cdot \varepsilon ^{k-1}v_{1}\left( x\right) \nabla
_{y_{1}}v_{2}\left( \frac{x}{\varepsilon }\right)  \notag \\
\times \left. c_{1}\left( t\right) c_{2}\left( \frac{t}{\varepsilon ^{q}}%
\right) c_{3}\left( \frac{t}{\varepsilon ^{r}}\right) \emph{d}x\emph{d}%
t\right) =0\text{.}  \notag
\end{gather}

Now we are ready to prove the independencies of local time variables and we
start by showing when $u_{2}$ is independent of $s_{2}$. Let $r>4+p$ and
choose $k=r-p-2$ in (\ref{test functions 1}). As $\varepsilon \rightarrow 0$%
, applying Theorems \ref{Theorem - gradient charact} and \ref{Theorem -
Compactresult vw}, the limit of (\ref{point of departue 1}) becomes%
\begin{gather*}
\int_{\Omega _{T}}\int_{\mathcal{Y}_{2,2}}-u_{2}\left(
x,t,y^{2},s^{2}\right) v_{1}\left( x\right) v_{2}\left( y_{1}\right)
v_{3}\left( y_{2}\right) \\
\times c_{1}\left( t\right) c_{2}\left( s_{1}\right) \partial
_{s_{2}}c_{3}\left( s_{2}\right) \emph{d}y^{2}\emph{d}s^{2}\emph{d}x\emph{d}%
t=0
\end{gather*}%
and by the Variational Lemma%
\begin{equation*}
\int_{S_{2}}-u_{2}\left( x,t,y^{2},s^{2}\right) \partial _{s_{2}}c_{3}\left(
s_{2}\right) \emph{d}s_{2}=0
\end{equation*}%
a.e. in $\Omega _{T}\times \mathcal{Y}_{2,1}$, which indicates that $u_{2}$
is independent of $s_{2}$.

Now we show independence of $s_{1}$ in $u_{2}$. Let $q>4+p$ and since $r>q$
this implies that $u_{2}$ is independent of $s_{2}$. Therefore we let $%
c_{3}\equiv 1$ in (\ref{test functions 1}) and we choose $k=q-p-2$. Passing
to the limit in (\ref{point of departue 1}), Theorems \ref{Theorem -
gradient charact} and \ref{Theorem - Compactresult vw} yield%
\begin{equation*}
\int_{\Omega _{T}}\int_{\mathcal{Y}_{2,2}}-u_{2}\left(
x,t,y^{2},s_{1}\right) v_{1}\left( x\right) v_{2}\left( y_{1}\right)
v_{3}\left( y_{2}\right) c_{1}\left( t\right) \partial _{s_{1}}c_{2}\left(
s_{1}\right) \emph{d}y^{2}\emph{d}s^{2}\emph{d}x\emph{d}t=0
\end{equation*}%
and integrating over $S_{2}$ and applying the Variational Lemma on $\Omega
_{T}\times Y^{2}$, we obtain that $u_{2}$ is independent of $s_{1}$.

Next we show independence of $s_{2}$ in $u_{1}$. Let $r>2+p$ and choose $%
k=r-p-1$ in (\ref{test functions 2}). Letting $\varepsilon $ tend to zero in
(\ref{point of departue 2}), applying Theorems \ref{Theorem - gradient
charact} and \ref{Theorem - Compactresult vw}, we have%
\begin{equation*}
\int_{\Omega _{T}}\int_{\mathcal{Y}_{1,2}}-u_{1}\left(
x,t,y_{1},s^{2}\right) v_{1}\left( x\right) v_{2}\left( y_{1}\right)
c_{1}\left( t\right) c_{2}\left( s_{1}\right) \partial _{s_{2}}c_{3}\left(
s_{2}\right) \emph{d}y_{1}\emph{d}s^{2}\emph{d}x\emph{d}t=0
\end{equation*}%
and the Variational Lemma on $\Omega _{T}\times \mathcal{Y}_{1,1}$ shows
that $u_{1}$ is independent of $s_{2}$.

The last independence to show is when $u_{1}$ is independent of $s_{1}$.
Here we let $q>2+p$ and recalling that since $r>q$, $u_{1}$ is independent
of $s_{2}$. In (\ref{test functions 2}) we choose $k=q-p-1$ and set $%
c_{3}\equiv 1$. As $\varepsilon \rightarrow 0$ in (\ref{point of departue 2}%
), Theorems \ref{Theorem - gradient charact} and \ref{Theorem -
Compactresult vw} give%
\begin{equation*}
\int_{\Omega _{T}}\int_{\mathcal{Y}_{1,2}}-u_{1}\left(
x,t,y_{1},s_{1}\right) v_{1}\left( x\right) v_{2}\left( y_{1}\right)
c_{1}\left( t\right) \partial _{s_{1}}c_{2}\left( s_{1}\right) \emph{d}y_{1}%
\emph{d}s^{2}\emph{d}x\emph{d}t=0\text{.}
\end{equation*}%
Integrating over $S_{2}$ and using the Variational Lemma on $\Omega
_{T}\times Y_{1}$ we have that $u_{1}$ is independent of $s_{1}$.

To sum up, we know that $u_{1}$ is independent of $s_{2}$ whenever $r>2+p$
and that $u_{1}$ is independent of both $s_{1}$ and $s_{2}$, when $q>2+p$.
In the case when $r>4+p$, $u_{2}$ (and of course also $u_{1}$) is
independent of $s_{2}$ and if $q>4+p$ we have that $u_{2}$ (and $u_{1}$) is
independent of both $s_{1}$ and $s_{2}$. These independencies together with (%
\ref{homogeniserade bevis}) give the formulas for the homogenized
coefficient in the cases 1-13.

Now we are going to derive the system of local problems for each of the 13
cases. Each case has a system consisting of two local problems. The first
local problem is with respect to the faster microscopic scale $\varepsilon
_{2}=\varepsilon ^{2}$ and our point of departure is always (\ref{point of
departue 1}) where we have chosen $k=2$ in (\ref{test functions 1}). The
second local problem is with respect to the slower microscopic scale $%
\varepsilon _{1}=\varepsilon $ and the point of departure here is (\ref%
{point of departue 2}) where we have taken $k=1$ in (\ref{test functions 2}).

\textit{Case 1: }$r<2+p$. To obtain the first local problem we let $%
\varepsilon \rightarrow 0$ in (\ref{point of departue 1}) and applying
Theorem \ref{Theorem - gradient charact} we have%
\begin{gather*}
\int_{\Omega _{T}}\int_{\mathcal{Y}_{2,2}}a\left( y^{2},s^{2}\right) \left(
\nabla u\left( x,t\right) +\nabla _{y_{1}}u_{1}\left( x,t,y_{1},s^{2}\right)
+\nabla _{y_{2}}u_{2}\left( x,t,y^{2},s^{2}\right) \right) \\
\times v_{1}\left( x\right) v_{2}\left( y_{1}\right) \cdot \nabla
_{y_{2}}v_{3}\left( y_{2}\right) c_{1}\left( t\right) c_{2}\left(
s_{1}\right) c_{3}\left( s_{2}\right) \emph{d}y^{2}\emph{d}s^{2}\emph{d}x%
\emph{d}t=0\text{.}
\end{gather*}%
By the Variational Lemma on $\Omega _{T}\times \mathcal{Y}_{1,2}$, we obtain
the weak form of (\ref{first local case 1}).

For the second local problem, passing to the limit in (\ref{point of
departue 2}), using Theorems \ref{Theorem - gradient charact} and \ref%
{Theorem - Compactresult vw}, we obtain%
\begin{gather*}
\int_{\Omega _{T}}\int_{\mathcal{Y}_{2,2}}a\left( y^{2},s^{2}\right) \left(
\nabla u\left( x,t\right) +\nabla _{y_{1}}u_{1}\left( x,t,y_{1},s^{2}\right)
+\nabla _{y_{2}}u_{2}\left( x,t,y^{2},s^{2}\right) \right) \\
\times v_{1}\left( x\right) \cdot \nabla _{y_{1}}v_{2}\left( y_{1}\right)
c_{1}\left( t\right) c_{2}\left( s_{1}\right) c_{3}\left( s_{2}\right) \emph{%
d}y^{2}\emph{d}s^{2}\emph{d}x\emph{d}t=0
\end{gather*}%
and the Variational Lemma on $\Omega _{T}\times S^{2}$ gives us the weak
form of (\ref{second local case 1}).

\textit{Case 2:} $r=2+p$. Passing to the limit in (\ref{point of departue 1}%
) yields the same result as for the first local problem in case 1, which is
the weak form of (\ref{first local case 2}).

For the second local problem, we apply Theorems \ref{Theorem - gradient
charact} and \ref{Theorem - Compactresult vw} as we pass to the limit in (%
\ref{point of departue 2}) to get%
\begin{gather*}
\int_{\Omega _{T}}\int_{\mathcal{Y}_{1,2}}-u_{1}\left(
x,t,y_{1},s^{2}\right) v_{1}\left( x\right) v_{2}\left( y_{1}\right)
c_{1}\left( t\right) c_{2}\left( s_{1}\right) \partial _{s_{2}}c_{3}\left(
s_{2}\right) \emph{d}y_{1}\emph{d}s^{2}\emph{d}x\emph{d}t \\
+\int_{\Omega _{T}}\int_{\mathcal{Y}_{2,2}}a\left( y^{2},s^{2}\right) \left(
\nabla u\left( x,t\right) +\nabla _{y_{1}}u_{1}\left( x,t,y_{1},s^{2}\right)
+\nabla _{y_{2}}u_{2}\left( x,t,y^{2},s^{2}\right) \right) \\
\times v_{1}\left( x\right) \cdot \nabla _{y_{1}}v_{2}\left( y_{1}\right)
c_{1}\left( t\right) c_{2}\left( s_{1}\right) c_{3}\left( s_{2}\right) \emph{%
d}y^{2}\emph{d}s^{2}\emph{d}x\emph{d}t=0\text{.}
\end{gather*}%
Using the Variational Lemma on $\Omega _{T}\times S_{1}$, we get the weak
form of (\ref{second local case 2}).

\textit{Case 3:} $2+p<r<4+p$ and $q<2+p$. Passing to the limit in (\ref%
{point of departue 1}) and applying Theorems \ref{Theorem - gradient charact}
and \ref{Theorem - Compactresult vw}, recalling that $u_{1}$ is independent
of $s_{2}$, we arrive at%
\begin{gather*}
\int_{\Omega _{T}}\int_{\mathcal{Y}_{2,2}}a\left( y^{2},s^{2}\right) \left(
\nabla u\left( x,t\right) +\nabla _{y_{1}}u_{1}\left( x,t,y_{1},s_{1}\right)
+\nabla _{y_{2}}u_{2}\left( x,t,y^{2},s^{2}\right) \right) \\
\times v_{1}\left( x\right) v_{2}\left( y_{1}\right) \cdot \nabla
_{y_{2}}v_{3}\left( y_{2}\right) c_{1}\left( t\right) c_{2}\left(
s_{1}\right) c_{3}\left( s_{2}\right) \emph{d}y^{2}\emph{d}s^{2}\emph{d}x%
\emph{d}t=0\text{.}
\end{gather*}%
Applying the Variational Lemma on $\Omega _{T}\times \mathcal{Y}_{1,2}$ we
have the weak form of (\ref{first local case 3}).

Because of the independence of $s_{2}$ in $u_{1}$, we can let $c_{3}\equiv 1$
in (\ref{test functions 2}). As $\varepsilon \rightarrow 0$ in (\ref{point
of departue 2}), by Theorems \ref{Theorem - gradient charact} and \ref%
{Theorem - Compactresult vw} we obtain%
\begin{gather*}
\int_{\Omega _{T}}\int_{\mathcal{Y}_{2,2}}a\left( y^{2},s^{2}\right) \left(
\nabla u\left( x,t\right) +\nabla _{y_{1}}u_{1}\left( x,t,y_{1},s_{1}\right)
+\nabla _{y_{2}}u_{2}\left( x,t,y^{2},s^{2}\right) \right) \\
\times v_{1}\left( x\right) \cdot \nabla _{y_{1}}v_{2}\left( y_{1}\right)
c_{1}\left( t\right) c_{2}\left( s_{1}\right) \emph{d}y^{2}\emph{d}s^{2}%
\emph{d}x\emph{d}t=0
\end{gather*}%
and the Variational Lemma on $\Omega _{T}\times S_{1}$ gives the weak form
of (\ref{second local case 3}).

\textit{Case 4:} $r<4+p$ and $q=2+p$. Passing to the limit in (\ref{point of
departue 1}), remembering that $u_{1}$ is independent of $s_{2}$, by
Theorems \ref{Theorem - gradient charact} and \ref{Theorem - Compactresult
vw} we arrive at the same local problem as the first one in case 3, which is
the weak form of (\ref{first local case 4}).

Letting $c_{3}\equiv 1$ in (\ref{test functions 2}) and passing to the limit
in (\ref{point of departue 2}), applying Theorems \ref{Theorem - gradient
charact} and \ref{Theorem - Compactresult vw}, we get%
\begin{gather*}
\int_{\Omega _{T}}\int_{\mathcal{Y}_{1,2}}-u_{1}\left(
x,t,y_{1},s_{1}\right) v_{1}\left( x\right) v_{2}\left( y_{1}\right)
c_{1}\left( t\right) \partial _{s_{1}}c_{2}\left( s_{1}\right) \emph{d}y_{1}%
\emph{d}s^{2}\emph{d}x\emph{d}t \\
+\int_{\Omega _{T}}\int_{\mathcal{Y}_{2,2}}a\left( y^{2},s^{2}\right) \left(
\nabla u\left( x,t\right) +\nabla _{y_{1}}u_{1}\left( x,t,y_{1},s_{1}\right)
+\nabla _{y_{2}}u_{2}\left( x,t,y^{2},s^{2}\right) \right) \\
\times v_{1}\left( x\right) \cdot \nabla _{y_{1}}v_{2}\left( y_{1}\right)
c_{1}\left( t\right) c_{2}\left( s_{1}\right) \emph{d}y^{2}\emph{d}s^{2}%
\emph{d}x\emph{d}t=0\text{.}
\end{gather*}%
Integrating over $S_{2}$ in the first integral and applying the Variational
Lemma on $\Omega _{T}$ we get the weak form of (\ref{second local case 4}).

\textit{Case 5:} $r<4+p$ and $q>2+p$. Remembering that $u_{1}$ is
independent of both $s_{1}$ and $s_{2}$, when $\varepsilon \rightarrow 0$ in
(\ref{point of departue 1}) we apply Theorems \ref{Theorem - gradient
charact} and \ref{Theorem - Compactresult vw} and have%
\begin{gather*}
\int_{\Omega _{T}}\int_{\mathcal{Y}_{2,2}}a\left( y^{2},s^{2}\right) \left(
\nabla u\left( x,t\right) +\nabla _{y_{1}}u_{1}\left( x,t,y_{1}\right)
+\nabla _{y_{2}}u_{2}\left( x,t,y^{2},s^{2}\right) \right) \\
\times v_{1}\left( x\right) v_{2}\left( y_{1}\right) \cdot \nabla
_{y_{2}}v_{3}\left( y_{2}\right) c_{1}\left( t\right) c_{2}\left(
s_{1}\right) c_{3}\left( s_{2}\right) \emph{d}y^{2}\emph{d}s^{2}\emph{d}x%
\emph{d}t=0\text{.}
\end{gather*}%
By using the Variational Lemma on $\Omega _{T}\times \mathcal{Y}_{1,2}$ we
arrive at the weak form of (\ref{first local case 5}).

Because of the independencies, we can let $c_{2}\equiv 1$ and $c_{3}\equiv 1$
in (\ref{test functions 2}). Applying Theorem \ref{Theorem - gradient
charact} as $\varepsilon $ tends to zero in (\ref{point of departue 2})
yields%
\begin{gather*}
\int_{\Omega _{T}}\int_{\mathcal{Y}_{2,2}}a\left( y^{2},s^{2}\right) \left(
\nabla u\left( x,t\right) +\nabla _{y_{1}}u_{1}\left( x,t,y_{1}\right)
+\nabla _{y_{2}}u_{2}\left( x,t,y^{2},s^{2}\right) \right) \\
\times v_{1}\left( x\right) \cdot \nabla _{y_{1}}v_{2}\left( y_{1}\right)
c_{1}\left( t\right) \emph{d}y^{2}\emph{d}s^{2}\emph{d}x\emph{d}t=0
\end{gather*}%
and by the Variational Lemma on $\Omega _{T}$ we get the weak form of (\ref%
{second local case 5}).

\textit{Case 6:} $r=4+p$ and $q<2+p$. Noting that $u_{1}$ is independent of $%
s_{2}$, passing to the limit in (\ref{point of departue 1}), Theorems \ref%
{Theorem - gradient charact} and \ref{Theorem - Compactresult vw} give us%
\begin{gather*}
\int_{\Omega _{T}}\int_{\mathcal{Y}_{2,2}}-u_{2}\left(
x,t,y^{2},s^{2}\right) v_{1}\left( x\right) v_{2}\left( y_{1}\right)
v_{3}\left( y_{2}\right) c_{1}\left( t\right) c_{2}\left( s_{1}\right)
\partial _{s_{2}}c_{3}\left( s_{2}\right) \emph{d}y^{2}\emph{d}s^{2}\emph{d}x%
\emph{d}t \\
+\int_{\Omega _{T}}\int_{\mathcal{Y}_{2,2}}a\left( y^{2},s^{2}\right) \left(
\nabla u\left( x,t\right) +\nabla _{y_{1}}u_{1}\left( x,t,y_{1},s_{1}\right)
+\nabla _{y_{2}}u_{2}\left( x,t,y^{2},s^{2}\right) \right) \\
\times v_{1}\left( x\right) v_{2}\left( y_{1}\right) \cdot \nabla
_{y_{2}}v_{3}\left( y_{2}\right) c_{1}\left( t\right) c_{2}\left(
s_{1}\right) c_{3}\left( s_{2}\right) \emph{d}y^{2}\emph{d}s^{2}\emph{d}x%
\emph{d}t=0\text{.}
\end{gather*}%
Applying the Variational Lemma on $\Omega _{T}\times \mathcal{Y}_{1,1}$ we
have the weak form of (\ref{first local case 6}).

Because of the independence in $u_{1}$, we can let $c_{3}\equiv 1$ in (\ref%
{test functions 2}) and as $\varepsilon \rightarrow 0$ (\ref{point of
departue 2}) becomes, due to Theorems \ref{Theorem - gradient charact} and %
\ref{Theorem - Compactresult vw}, 
\begin{gather*}
\int_{\Omega _{T}}\int_{\mathcal{Y}_{2,2}}a\left( y^{2},s^{2}\right) \left(
\nabla u\left( x,t\right) +\nabla _{y_{1}}u_{1}\left( x,t,y_{1},s_{1}\right)
+\nabla _{y_{2}}u_{2}\left( x,t,y^{2},s^{2}\right) \right) \\
\times v_{1}\left( x\right) \cdot \nabla _{y_{1}}v_{2}\left( y_{1}\right)
c_{1}\left( t\right) c_{2}\left( s_{1}\right) \emph{d}y^{2}\emph{d}s^{2}%
\emph{d}x\emph{d}t=0\text{.}
\end{gather*}%
Using the Variational Lemma on $\Omega _{T}\times S_{1}$ we obtain the weak
form of (\ref{second local case 6}).

\textit{Case 7:} $r=4+p$ and $q=2+p$. As $\varepsilon \rightarrow 0$ in (\ref%
{point of departue 1}), we end up with the same local problem as the first
one in case 6, which is the weak form of (\ref{first local case 7}).

Letting $\varepsilon $ tend to zero in (\ref{point of departue 2}),
recalling that $u_{1}$ is independent of $s_{2}$ so that $c_{3}\equiv 1$,
Theorems \ref{Theorem - gradient charact} and \ref{Theorem - Compactresult
vw} yield%
\begin{gather*}
\int_{\Omega _{T}}\int_{\mathcal{Y}_{1,2}}-u_{1}\left(
x,t,y_{1},s_{1}\right) v_{1}\left( x\right) v_{2}\left( y_{1}\right)
c_{1}\left( t\right) \partial _{s_{1}}c_{2}\left( s_{1}\right) \emph{d}y_{1}%
\emph{d}s^{2}\emph{d}x\emph{d}t \\
+\int_{\Omega _{T}}\int_{\mathcal{Y}_{2,2}}a\left( y^{2},s^{2}\right) \left(
\nabla u\left( x,t\right) +\nabla _{y_{1}}u_{1}\left( x,t,y_{1},s_{1}\right)
+\nabla _{y_{2}}u_{2}\left( x,t,y^{2},s^{2}\right) \right) \\
\times v_{1}\left( x\right) \cdot \nabla _{y_{1}}v_{2}\left( y_{1}\right)
c_{1}\left( t\right) c_{2}\left( s_{1}\right) \emph{d}y^{2}\emph{d}s^{2}%
\emph{d}x\emph{d}t=0\text{.}
\end{gather*}%
Integrating over $S_{2}$ in the first integral and taking the Variational
Lemma on $\Omega _{T}$ gives us the weak form of (\ref{second local case 7}).

\textit{Case 8:} $r=4+p$ and $q>2+p$. Letting $\varepsilon $ tend to zero in
(\ref{point of departue 1}), observing that $u_{1}$ is independent of both $%
s_{1}$ and $s_{2}$, Theorems \ref{Theorem - gradient charact} and \ref%
{Theorem - Compactresult vw} give%
\begin{gather*}
\int_{\Omega _{T}}\int_{\mathcal{Y}_{2,2}}-u_{2}\left(
x,t,y^{2},s^{2}\right) v_{1}\left( x\right) v_{2}\left( y_{1}\right)
v_{3}\left( y_{2}\right) c_{1}\left( t\right) c_{2}\left( s_{1}\right)
\partial _{s_{2}}c_{3}\left( s_{2}\right) \emph{d}y^{2}\emph{d}s^{2}\emph{d}x%
\emph{d}t \\
+\int_{\Omega _{T}}\int_{\mathcal{Y}_{2,2}}a\left( y^{2},s^{2}\right) \left(
\nabla u\left( x,t\right) +\nabla _{y_{1}}u_{1}\left( x,t,y_{1}\right)
+\nabla _{y_{2}}u_{2}\left( x,t,y^{2},s^{2}\right) \right) \\
\times v_{1}\left( x\right) v_{2}\left( y_{1}\right) \cdot \nabla
_{y_{2}}v_{3}\left( y_{2}\right) c_{1}\left( t\right) c_{2}\left(
s_{1}\right) c_{3}\left( s_{2}\right) \emph{d}y^{2}\emph{d}s^{2}\emph{d}x%
\emph{d}t=0
\end{gather*}%
and by applying the Variational Lemma on $\Omega _{T}\times \mathcal{Y}%
_{1,1} $ we get the weak form of (\ref{first local case 8}).

For the second local problem, due to independencies in $u_{1}$, we can let
both $c_{2}\equiv 1$ and $c_{3}\equiv 1$ in (\ref{test functions 2}).
Letting $\varepsilon \rightarrow 0$ in (\ref{point of departue 2}), from
Theorem \ref{Theorem - gradient charact} we obtain%
\begin{gather*}
\int_{\Omega _{T}}\int_{\mathcal{Y}_{2,2}}a\left( y^{2},s^{2}\right) \left(
\nabla u\left( x,t\right) +\nabla _{y_{1}}u_{1}\left( x,t,y_{1}\right)
+\nabla _{y_{2}}u_{2}\left( x,t,y^{2},s^{2}\right) \right) \\
\times v_{1}\left( x\right) \cdot \nabla _{y_{1}}v_{2}\left( y_{1}\right)
c_{1}\left( t\right) \emph{d}y^{2}\emph{d}s^{2}\emph{d}x\emph{d}t=0
\end{gather*}%
and the Variational Lemma on $\Omega _{T}$ gives us the weak form of (\ref%
{second local case 8}).

\textit{Case 9:} $r>4+p$ and $q<2+p$. Recalling that $u_{2}$ (and $u_{1}$)
is independent of $s_{2}$, we can let $c_{3}\equiv 1$ in (\ref{test
functions 1}). Passing to the limit in (\ref{point of departue 1}), Theorem %
\ref{Theorem - gradient charact} gives us%
\begin{gather*}
\int_{\Omega _{T}}\int_{\mathcal{Y}_{2,2}}a\left( y^{2},s^{2}\right) \left(
\nabla u\left( x,t\right) +\nabla _{y_{1}}u_{1}\left( x,t,y_{1},s_{1}\right)
+\nabla _{y_{2}}u_{2}\left( x,t,y^{2},s_{1}\right) \right) \\
\times v_{1}\left( x\right) v_{2}\left( y_{1}\right) \cdot \nabla
_{y_{2}}v_{3}\left( y_{2}\right) c_{1}\left( t\right) c_{2}\left(
s_{1}\right) \emph{d}y^{2}\emph{d}s^{2}\emph{d}x\emph{d}t=0
\end{gather*}%
and using the Variational Lemma on $\Omega _{T}\times \mathcal{Y}_{1,1}$ we
obtain the weak form of (\ref{first local case 9}).

Due to the independence in $u_{1}$ we can let $c_{3}\equiv 1$ in (\ref{test
functions 2}) and as $\varepsilon \rightarrow 0$ in (\ref{point of departue
2}), Theorems \ref{Theorem - gradient charact} and \ref{Theorem -
Compactresult vw} yield%
\begin{gather*}
\int_{\Omega _{T}}\int_{\mathcal{Y}_{2,2}}a\left( y^{2},s^{2}\right) \left(
\nabla u\left( x,t\right) +\nabla _{y_{1}}u_{1}\left( x,t,y_{1},s_{1}\right)
+\nabla _{y_{2}}u_{2}\left( x,t,y^{2},s_{1}\right) \right) \\
\times v_{1}\left( x\right) \cdot \nabla _{y_{1}}v_{2}\left( y_{1}\right)
c_{1}\left( t\right) c_{2}\left( s_{1}\right) \emph{d}y^{2}\emph{d}s^{2}%
\emph{d}x\emph{d}t=0\text{.}
\end{gather*}%
By the Variational Lemma on $\Omega _{T}\times S_{1}$ we have the weak form
of (\ref{second local case 9}).

\textit{Case 10:} $r>4+p$ and $q=2+p$. Because of the independence of $s_{2}$
in $u_{2}$ we let $c_{3}\equiv 1$ in (\ref{test functions 1}) and as $%
\varepsilon $ tends to zero in (\ref{point of departue 1}), recalling that
also $u_{1}$ is independent of $s_{2}$, Theorems \ref{Theorem - gradient
charact} and \ref{Theorem - Compactresult vw} give the same first local
problem as in case 9, which is the weak form of (\ref{first local case 10}).

Again we can let $c_{3}\equiv 1$ in (\ref{test functions 2}), due to
independence in $u_{1}$. Letting $\varepsilon \rightarrow 0$ in (\ref{point
of departue 2}), from Theorems \ref{Theorem - gradient charact} and \ref%
{Theorem - Compactresult vw} we have%
\begin{gather*}
\int_{\Omega _{T}}\int_{\mathcal{Y}_{1,2}}-u_{1}\left(
x,t,y_{1},s_{1}\right) v_{1}\left( x\right) v_{2}\left( y_{1}\right)
c_{1}\left( t\right) \partial _{s_{1}}c_{2}\left( s_{1}\right) \emph{d}y_{1}%
\emph{d}s^{2}\emph{d}x\emph{d}t \\
+\int_{\Omega _{T}}\int_{\mathcal{Y}_{2,2}}a\left( y^{2},s^{2}\right) \left(
\nabla u\left( x,t\right) +\nabla _{y_{1}}u_{1}\left( x,t,y_{1},s_{1}\right)
+\nabla _{y_{2}}u_{2}\left( x,t,y^{2},s_{1}\right) \right) \\
\times v_{1}\left( x\right) \cdot \nabla _{y_{1}}v_{2}\left( y_{1}\right)
c_{1}\left( t\right) c_{2}\left( s_{1}\right) \emph{d}y^{2}\emph{d}s^{2}%
\emph{d}x\emph{d}t=0\text{.}
\end{gather*}%
Integrating over $S_{2}$ in the first integral and using the Variational
Lemma on $\Omega _{T}$ we get the weak form of (\ref{second local case 10}).

\textit{Case 11:} $r>4+p$ and $2+p<q<4+p$. Since $u_{2}$ is independent of $%
s_{2}$, we let $c_{3}\equiv 1$ in (\ref{test functions 1}). We also have
independence of $s_{1}$ and $s_{2}$ in $u_{1}$, so as $\varepsilon
\rightarrow 0$ in (\ref{point of departue 1}), Theorems \ref{Theorem -
gradient charact} and \ref{Theorem - Compactresult vw} give%
\begin{gather*}
\int_{\Omega _{T}}\int_{\mathcal{Y}_{2,2}}a\left( y^{2},s^{2}\right) \left(
\nabla u\left( x,t\right) +\nabla _{y_{1}}u_{1}\left( x,t,y_{1}\right)
+\nabla _{y_{2}}u_{2}\left( x,t,y^{2},s_{1}\right) \right) \\
\times v_{1}\left( x\right) v_{2}\left( y_{1}\right) \cdot \nabla
_{y_{2}}v_{3}\left( y_{2}\right) c_{1}\left( t\right) c_{2}\left(
s_{1}\right) \emph{d}y^{2}\emph{d}s^{2}\emph{d}x\emph{d}t=0\text{.}
\end{gather*}%
Applying the Variational Lemma on $\Omega _{T}\times \mathcal{Y}_{1,1}$ we
get the weak form of (\ref{first local case 11}).

Because of the independencies in $u_{1}$, for the second local problem, we
can let both $c_{2}\equiv 1$ and $c_{3}\equiv 1$ in (\ref{test functions 2}%
). Passing to the limit in (\ref{point of departue 2}), applying Theorem \ref%
{Theorem - gradient charact}, we end up with%
\begin{gather*}
\int_{\Omega _{T}}\int_{\mathcal{Y}_{2,2}}a\left( y^{2},s^{2}\right) \left(
\nabla u\left( x,t\right) +\nabla _{y_{1}}u_{1}\left( x,t,y_{1}\right)
+\nabla _{y_{2}}u_{2}\left( x,t,y^{2},s_{1}\right) \right) \\
\times v_{1}\left( x\right) \cdot \nabla _{y_{1}}v_{2}\left( y_{1}\right)
c_{1}\left( t\right) \emph{d}y^{2}\emph{d}s^{2}\emph{d}x\emph{d}t=0
\end{gather*}%
and from the Variational Lemma on $\Omega _{T}$ we obtain the weak form of (%
\ref{second local case 11}).

\textit{Case 12:} $q=4+p$. Since $u_{2}$ is independent of $s_{2}$ we can
take $c_{3}\equiv 1$ in (\ref{test functions 1}). Recalling that $u_{1}$ is
independent of $s_{1}$ and $s_{2}$, passing to the limit in (\ref{point of
departue 1}), from Theorems \ref{Theorem - gradient charact} and \ref%
{Theorem - Compactresult vw} we have%
\begin{gather*}
\int_{\Omega _{T}}\int_{\mathcal{Y}_{2,2}}-u_{2}\left(
x,t,y^{2},s_{1}\right) v_{1}\left( x\right) v_{2}\left( y_{1}\right)
v_{3}\left( y_{2}\right) c_{1}\left( t\right) \partial _{s_{1}}c_{2}\left(
s_{1}\right) \emph{d}y^{2}\emph{d}s^{2}\emph{d}x\emph{d}t \\
+\int_{\Omega _{T}}\int_{\mathcal{Y}_{2,2}}a\left( y^{2},s^{2}\right) \left(
\nabla u\left( x,t\right) +\nabla _{y_{1}}u_{1}\left( x,t,y_{1}\right)
+\nabla _{y_{2}}u_{2}\left( x,t,y^{2},s_{1}\right) \right) \\
\times v_{1}\left( x\right) v_{2}\left( y_{1}\right) \cdot \nabla
_{y_{2}}v_{3}\left( y_{2}\right) c_{1}\left( t\right) c_{2}\left(
s_{1}\right) \emph{d}y^{2}\emph{d}s^{2}\emph{d}x\emph{d}t=0\text{.}
\end{gather*}%
Integrating over $S_{2}$ in the first integral and applying the Variational
Lemma on $\Omega _{T}\times Y_{1}$ we have the weak form of (\ref{first
local case 12}).

Because of the independencies in $u_{1}$ we can let $c_{2}\equiv 1$ and $%
c_{3}\equiv 1$ in (\ref{test functions 2}) and as $\varepsilon $ tends to
zero in (\ref{point of departue 2}), we get the same result as for the
second local problem in case 11, sharing the weak form of (\ref{second local
case 12}).

\textit{Case 13:} $q>4+p$. Recalling that $u_{2}$ is independent of $s_{1}$
and $s_{2}$, we can set $c_{2}\equiv 1$ and $c_{3}\equiv 1$ in (\ref{test
functions 1}). Noting that also $u_{1}$ is independent of both $s_{1}$ and $%
s_{2}$, letting $\varepsilon \rightarrow 0$ in (\ref{point of departue 1}),
Theorem \ref{Theorem - gradient charact} yields%
\begin{gather*}
\int_{\Omega _{T}}\int_{\mathcal{Y}_{2,2}}a\left( y^{2},s^{2}\right) \left(
\nabla u\left( x,t\right) +\nabla _{y_{1}}u_{1}\left( x,t,y_{1}\right)
+\nabla _{y_{2}}u_{2}\left( x,t,y^{2}\right) \right) \\
\times v_{1}\left( x\right) v_{2}\left( y_{1}\right) \cdot \nabla
_{y_{2}}v_{3}\left( y_{2}\right) c_{1}\left( t\right) \emph{d}y^{2}\emph{d}%
s^{2}\emph{d}x\emph{d}t=0
\end{gather*}%
and applying the Variational Lemma on $\Omega _{T}\times Y_{1}$ gives the
weak form of (\ref{first local case 13}).

For the second local problem, we again let $c_{2}\equiv 1$ and $c_{3}\equiv 1
$ in (\ref{test functions 2}) and as $\varepsilon \rightarrow 0$ in (\ref%
{point of departue 2}), Theorem \ref{Theorem - gradient charact} gives%
\begin{gather*}
\int_{\Omega _{T}}\int_{\mathcal{Y}_{2,2}}a\left( y^{2},s^{2}\right) \left(
\nabla u\left( x,t\right) +\nabla _{y_{1}}u_{1}\left( x,t,y_{1}\right)
+\nabla _{y_{2}}u_{2}\left( x,t,y^{2}\right) \right)  \\
\times v_{1}\left( x\right) \cdot \nabla _{y_{1}}v_{2}\left( y_{1}\right)
c_{1}\left( t\right) \emph{d}y^{2}\emph{d}s^{2}\emph{d}x\emph{d}t=0\text{.}
\end{gather*}%
From the Variational Lemma on $\Omega _{T}$ we get the weak form of (\ref%
{second local case 13}).
\end{proof}

\end{document}